\pgfplotsset{width=10cm,compat=1.9}
\newcolumntype{L}{>{$}l<{$}} 
\DeclareSymbolFont{cyrletters}{OT2}{wncyr}{m}{n}
\DeclareMathSymbol{\Sha}{\mathalpha}{cyrletters}{"58}
\theoremstyle{plain}
\newtheorem{Theorem}[subsubsection]{Theorem}
\newtheorem{Lemma}[subsubsection]{Lemma}
\newtheorem{Corollary}[subsubsection]{Corollary}
\newtheorem{Proposition}[subsubsection]{Proposition}
\newtheorem{Definition}[subsubsection]{Definition}
\theoremstyle{remark}
\newtheorem{Example}[subsubsection]{\bf Example}
\newtheorem{Remark}[subsubsection]{\bf Remark}
\numberwithin{equation}{subsubsection}
\newcommand{\bb}[1]{{\bold{#1}}}
\newcommand{\bs}[1]{{\boldsymbol{#1}}}
\newcommand{\scr}[1]{{\mathscr{#1}}}
\newcommand{\mbb}[1]{{\mathbb{#1}}}
\newcommand{\mf}[1]{{\mathfrak{#1}}}
\renewcommand{\a}{{\bs a}}
\renewcommand{\b}{{\bs b}}
\renewcommand{\c}{{\bs c}}
\newcommand\bsa{{\bs a}}
\newcommand\bsb{{\bs b}}
\newcommand\bsc{{\bs c}}
\newcommand{\h}{{\text{\rm h}}}
\renewcommand{\i}{{\boldsymbol i}}
\renewcommand{\j}{{\boldsymbol j}}
\renewcommand\u{{\boldsymbol u}}
\renewcommand\v{{\boldsymbol v}}
\newcommand\x{{\bold x}}
\newcommand\0{{\boldsymbol 0}}
\newcommand{\A}{{\mathbb A}}
\newcommand{\C}{{\mathbb C}}
\newcommand{\DD}{{\mathfrak D}}
\newcommand{\EE}{{\mathfrak E}}
\newcommand{\FF}{{\scr{F}}}
\renewcommand{\P}{{\mathbb P}}
\newcommand{\R}{{\mathbb R}}
\newcommand{\T}{{\mathsf{T}}}
\newcommand{\TT}{{\mathfrak{T}}}
\newcommand{\V}{{\mathsf V}}
\newcommand{\X}{{\text{\rm X}}}
\newcommand{\Z}{{\mathbb Z}}
\newcommand{\sdbmx}[1]{{\text{\footnotesize $\begin{bmatrix}#1\end{bmatrix}$}}} 
\newcommand{\br}[1]{{\langle{#1}\rangle}}
\newcommand{\ds}[1]{{\displaystyle{#1}}}
\newcommand{\ep}{{\varepsilon}}
\newcommand{\height}{{\text{\rm ht}\,}}
\newcommand{\id}{{\text{\rm id}}}
\newcommand{\im}{{\text{\rm im}}}
\newcommand{\isim}{{\,\begin{tikzpicture}\draw[->,-latex](0,0)--(.75,0);\node at (.35,.1) {$\sim$};\end{tikzpicture}\;}}
\newcommand{\isom}{{\;\simeq\;}}
\newcommand{\lm}{\longmapsto}
\newcommand{\lr}{\longrightarrow}
\newcommand\dashlr{\,\tikz[baseline]{\draw[dashed, ->, -latex] (0,.1)--(.55,.1);}\;}
\newcommand{\mywedge}[1]{{\bigwedge}^{\kern-1pt{#1}}}
\renewcommand{\pmod}[1]{{\text{\rm (mod ${#1}$)}}}
\newcommand{\prob}[1]{{\noindent{\bf{#1}}}}
\newcommand{\sfrac}[2]{{\frac{\text{\footnotesize$#1$}}{\text{\footnotesize$#2$}}}}
\newcommand{\ul}[1]{{\underline{#1}}}
\newcommand\wh[1]{{\widehat{#1}}}
\newcommand{\widesim}[1][1.5]{\mathrel{{\scalebox{#1}[1]{$\sim$}}}} 
\newcommand\wt[1]{{\widetilde{#1}}}
\newcommand{\Arg}{{\sf{Arg}}}
\newcommand{\Bl}{{\text{\rm Bl}}}
\newcommand{\Hom}{{\text{\rm Hom}}}
\newcommand{\SU}{{\mathsf{SU}}}
\newcommand{\Set}{\mathsf{Set}}
\newcommand{\Var}[1]{\mathsf{#1}\text{-}\mathsf{Var}}
\renewcommand{\V}{{\rm{V}}}
\renewcommand{\a}{{\bsa}}
\renewcommand{\b}{{\bsb}}
\renewcommand{\c}{{\bsc}}
\newcommand{\re}{{\rm re}}
\renewcommand{\im}{{\rm im}}
\renewcommand\X{{\mbb{X}}}
\renewcommand\T{{\mbb{T}}}
\renewcommand\TT{{\mathfrak T}}
\renewcommand\DD{{\mathfrak D}}
\renewcommand\EE{{X}}
\renewcommand\FF{{\scr F}}
\definecolor{maroon}{RGB}{128, 0, 0}
\definecolor{darkgreen}{RGB}{0, 100, 0}
\definecolor{schoolbusyellow}{RGB}{255, 216, 0}
\definecolor{burntorangeclassic}{RGB}{204, 85, 0}
\definecolor{burntorange}{RGB}{191, 87, 0}
\author[Brussel, Goertz, Guptill, Lyle]{Eric Brussel, Madeleine Goertz, Elijah Guptill, Kelly Lyle}
\thanks{The second author thanks the Frost Foundation for their generous support in the form of
the Frost Research Scholarship. The third and fourth authors thank the Frost Foundation for
their support during the 2023 Frost Summer Research Program.}
\begin{document}

\small
\title[Stack of Triangle Classes]
{The Stack of Similarity Classes of Triangles}

\begin{abstract}
We construct the smooth, compact moduli space of similarity classes of labeled, oriented triangles. 
The space, denoted $\DD$, is a connected sum of three projective planes, 
and projects via blowdown to two shape spaces that have appeared in the literature:
the well-known (Riemann) sphere (\cite{Kend84}, \cite{Beh}, \cite{Montgomery}, \cite{ES15}), and 
the less-well-known 2-torus (\cite{BG23}).
A natural action by the dihedral group $D_6$ defines the quotient stack $[\DD/D_6]$
of absolute (unlabeled, unoriented) classes.
\end{abstract}

\subjclass{14C05, 51M05, 60D05}

\maketitle

\tableofcontents


\newpage

\section{\Large Introduction}

The study of triangles in the plane dates back to Euclid, and 
is one of the oldest and most thoroughly investigated
subjects in all of mathematics. 
The literature on 
straightedge-compass constructions, special points, connections with Apollonian problems, 
and the Euclidean geometry of the plane is rich and enormous.
For a glimpse at depth, browse the list of geometrically 
defined triangle ``centers'' in 
\cite{Kim25}. There are thousands.

In 1880 Schubert \cite{Schubert} constructed what is now viewed as a smooth
compactification of the space of labeled, oriented, possibly-degenerate triangles,
as part of his enumerative calculus.
Schubert studied enumerative problems involving the number of triangles in a suitably
defined intersection of triangle families (\cite[p.80]{Semple}),
like the number of triangles simultaneously inscribed in one plane curve and circumscribed by another
(\cite[p.92]{ColFul89}).
His construction was made more rigorous by Study \cite{Study01}
and Semple \cite{Semple}, and excellent summaries and improvements of the final product appear in 
\cite{RSp81}, \cite{ColFul89}, and \cite{vdKM99}.
The work on triangles is an elementary instance of later work
\cite{FM94}, \cite{AxSi94}, \cite{Sinha}
on configuration spaces of $n$ distinct points on schemes and manifolds (\cite[p.189]{FM94}).

The basic problem of constructing a compact configuration space of distinct points 
appears already with two points in $\P^2(\R)$. 
The exclusion of the diagonal $\Delta$ of $\P^2(\R)\times\P^2(\R)$, 
where the points are not distinct, leaves a
non-compact space whose geometry is badly behaved near $\Delta$, 
in the sense that it fails to give
a controlled description of how points approach each other, or {\it degenerate}.
Simply including $\Delta$ compactifies the space but does not properly render the geometry,
and so doesn't have the functoriality of a universal space.
A proper compactification resolves the problem, and in this case is achieved by blowing up
$\P^2(\R)\times\P^2(\R)$ along $\Delta$.

Parameter spaces of triangle {\it up to similarity}, called shape spaces in \cite{Kend84}, 
are quotients of triangle spaces, in which 
we forget the location, scaling, and rotational aspects.
The problem of constructing such spaces is also old,
with examples dating back to the nineteenth century in \cite{WSB} and \cite{Dod58}. More recent papers 
include \cite{Kend84}, \cite{Port}, 
\cite{Beh}, \cite{Montgomery}, \cite{ES15}, \cite{Anderson}, \cite{CNSS},  and \cite{BG23}.

In this paper we construct
the compact moduli space of labeled and oriented similarity classes of triangles
in the plane.
It is naturally a quotient of the space of triangles,
which we also construct, and
incorporates all degenerate classes in a geometrically correct way,
as it must.
The quotient stack under a natural $D_6$-action is a fine stack of 
unrestricted similarity classes.

The underlying space, known as {\it Dyck's surface}, 
is isomorphic to a connected sum of three projective planes, denoted $\#_3\P^2(\R)$.
It is named after Walther van Dyck, who proved in 1888 (\cite{Dyck88})
that it is isomorphic to the connected sum of a torus and a projective plane, $\mbb T\#\P^2(\R)$,
thereby providing what turns out to be the only relation in the (non-unique) factorization into primes of an
arbitrary compact surface in $\R^3$. 

\subsection{\bf Geometric Correctness}
Dyck's surface reconciles two nonisomorphic compactifications
of moduli spaces of nondegenerate classes that have appeared in the literature, 
the Riemann sphere $\wh\C$ (\cite{Kend84}, \cite{Iwai},
\cite{Beh}, \cite{Montgomery}, \cite{ES15}, \cite{CNSS}), and the torus $\T$ (\cite{BG23}).
Though both  are  
metric closures of (isomorphic) moduli spaces of nondegenerates in Euclidean space,
they are constructed
using different triangle metrics, or {\it geometric variables}, induced by different
criteria for similarity: one uses side-side-side, the other angle-angle-angle.
Each non-compactified space seems to define a uniform probability distribution on triangle classes,
and so its metric closure has a similar claim. 
However, the closures are not isomorphic: We get a sphere in the first case, a torus in the second.
The reason can be traced to the practically disjoint notions of {\it degenerate} 
induced by the different triangle metrics.
With the triangle metric on $\T$,
for example, we find paths of nondegenerates that maintain a fixed distance 
apart and define distinct degenerate classes in $\T$, 
while the corresponding paths converge to a single degenerate class 
under the triangle metric of $\wh\C$.
And the same holds vice versa. Each model misses the other's degenerate classes.
As a consequence, each model fails to accurately describe the geometry of nondegenerate classes near these
limit points. This is illustrated in Section~\ref{mdc}.

Dyck's surface fixes the problem by incorporating both notions of degeneracy, which is a priori possible by Dyck's theorem.
The missing degenerate
classes on the sphere are recovered by blowing up at three ``double points'', and the missing degenerate
classes on the torus are recovered by blowing up at a single point.
This is proved in Section~\ref{ssss}.

\subsection{\bf Literature}
In the seminal paper \cite{FM94},
Fulton and MacPherson construct a moduli space $X[n]$ for the configurations of $n$ points on a nonsingular
scheme $X$. They introduce the concept of nested screens to parameterize degenerations,
with which they define a moduli functor from base schemes to families of $n$-point configurations on $X$.
They then perform a very delicate induction to show the functor is representable by a scheme $X[n]$, which
they construct in a sequence of blowups. This is \cite[Theorem 4]{FM94}.
We use the $n=3$ description of screens to prove our geometric variable on triangles 
is sufficiently robust to distinguish all degenerations.

The torus, as a shape space of
triangle {\it classes}, appears to our knowledge only in \cite{BG23}.
The sphere, which is more commonly held to represent such classes,
appears in the following. 

The goal of a series of papers by 
Kendall (see \cite{Kend84} and \cite{Kend89}) is to describe the shape space of $k$ points in $\R^m$,
study its topology, and make measurements in service of statistical analyses, for example to
search for nonrandomly placed stones in archaeological sites
(\cite[p.87]{Kend89}). 
The side-side-side description of classes clearly suits this application.
The term ``shape space'' refers to equivalence up to similarity, or
{\it what is left when we quotient out location, size, and rotation effects} \cite[p. 467]{Kend86}.
Kendall derives the sphere as the shape space of triangles in the plane using a method different
from ours in Section~\ref{ssss}, but ends up with what appears to be the same correspondence (\cite[Figure 3]{Kend89}). 
This approach
extends to general $k,m$. Kendall credits A. J. Casson with showing that when $m=k-1$
the result is topologically isomorphic to a sphere, but not smoothly when $m\geq 3$.

Montgomery \cite{Montgomery} uses the term ``shape space'' for triangles up to congruence,
and ``shape sphere'' for the quotient of shape space defined by similarity.
The shape space 
is constructed as a solution space of the three-body problem in physics,
incorporating the classical dynamics of three weighted vertices. 
The derivation of the shape sphere again is essentially
the same as what we present below in Section~\ref{ssss} (see \cite[Figures 2,6]{Montgomery}), 
though incorporating the dynamics makes the construction more complicated.
Montgomery credits \cite{Iwai} with the first formalization of shape space \cite[Theorem 1]{Montgomery},
while admitting the likelihood that 
the idea has been discovered and rediscovered before
and since (\cite[Remark, p.306]{Montgomery}). 

Behrend \cite{Beh} conducts a deep investigation of triangle classes in 
order to introduce algebraic stacks, with many examples
of fine moduli spaces and stacks determined by different restrictions on classes.
In particular, he notes {\it there are many ways to think of degenerate
triangles, all giving rise to different compactifications} (\cite[p.44]{Beh}).
He uses the (collinear) notion of degenerate induced by the side-side-side geometry
to construct a smooth compactification of the fine moduli space of 
nondegenerate classes, isomorphic to the Riemann sphere, together with a natural $S_3\times\Z_2$-action
(\cite[Exercises 1.37, 1.38, Figure 1.24]{Beh}).

\subsection{\bf Smooth Manifolds and $\R$-Varieties}\label{thesame}
The torus and sphere in $\R^3$, and all of the spaces we consider in this paper,
are {\it algebraic manifolds}, which are
smooth {\it $\R$-varieties} that are naturally smooth manifolds.
They include nonsingular affine and projective varieties over $\R$ and their blowups.
We let $\Var{\R}$ denote the category of smooth $\R$-varieties.

\subsection{\bf Outline}
The outline of the paper is as follows.
\begin{enumerate}[\rm 1.]
\item
Section~\ref{pf}:
We discuss the families of triangle classes given by {\it Poncelet's Porism}.
\item
Section~\ref{dn}: We define triangle and similarity with labels, orientation,
and degeneracy.
\item
Section~\ref{st}, \ref{sc}:
We prove the sets of triangles and classes are
smooth $\R$-varieties, the second one isomorphic to Dyck's surface.
\item
Section~\ref{mss}:
We prove Dyck's surface is the fine moduli space of labeled, oriented,
possibly-degenerate triangles up to similarity, 
define a group action, and discuss the resulting quotient stack of
unrestricted triangle classes.
\item
Section~\ref{ssss}: We prove that two blowdowns of Dyck's surface, 
to sphere and torus, recover the previous
shape space constructions of triangles up to similarity in the literature.
\end{enumerate}

\section{\Large Families of Triangle Classes and Geometry}\label{pf}
\subsection{\bf Poncelet Families}
Any triangle is uniquely inscribed in
and circumscribed by two circles: 
\[\begin{tikzpicture}[scale=.65] 
\draw (1,0) circle (2);
\draw (0,0) circle (3/4);
\filldraw (1.744, 1.857) circle (1pt); 
\filldraw (-.976, .307) circle (1pt); 
\filldraw (-.01, -1.726) circle (1pt); 
\draw (1.744, 1.857)--(-.976, .307)--(-.01, -1.726)--(1.744, 1.857);
\end{tikzpicture}\]
Chapple proved in 1746 (\cite[Theorem 295]{J60})
that conversely two circles of radius $r$ and $R$
are incircle and outcircle for a triangle
if and only if their centers are separated by a distance $d>0$
uniquely determined by the quadratic relation $(R-r)^2=r^2+d^2$. 
Since $r$ and $R$ determine $d$, the configurations
of circles in the plane
for which a ``Poncelet triangle'' exists are sparse. 

On the other hand, if a configuration admits one Poncelet triangle,
{\it Poncelet's Porism} \cite[Theorem 297]{J60} states that it admits infinitely many,
forming a {\it continuous family}. The family is
obtained by revolving the one triangle around the perimeter of the outcircle:
\[\begin{tikzpicture}[scale=.75]
\draw (0,0) circle (3/4);
\draw (1,0) circle (2);
\foreach \x in {-90, -60,-30,-15,0,15,30,60,90} {
\coordinate (A) at ({(cos(\x)+sqrt(4-(sin(\x))^2))*cos(\x)},{(cos(\x)+sqrt(4-(sin(\x))^2))*sin(\x)});
\coordinate (B) at ({((cos(\x)+sqrt(4-(sin(\x))^2))*cos(\x))
-((-2/3)*(cos(\x)-sqrt(4-(sin(\x))^2))*((55/16+cos(2*\x)+2*cos(\x)*sqrt(4-(sin(\x))^2))^.5*(sqrt(4-(sin(\x))^2))
-(3/4)*sin(\x))
*((1/4)*((12-cos(2*\x)+2*cos(\x)*sqrt(4-(sin(\x))^2))^.5*cos(\x)-(cos(\x)-sqrt(4-(sin(\x))^2))*sin(\x)))},
{(cos(\x)+sqrt(4-(sin(\x))^2))*sin(\x)
-((-2/3)*(cos(\x)-sqrt(4-(sin(\x))^2))*((55/16+cos(2*\x)+2*cos(\x)*sqrt(4-(sin(\x))^2))^.5*(sqrt(4-(sin(\x))^2))
-(3/4)*sin(\x))
*((1/4)*((12-cos(2*\x)+2*cos(\x)*sqrt(4-(sin(\x))^2))^.5*sin(\x)+(cos(\x)-sqrt(4-(sin(\x))^2))*cos(\x)))});
\coordinate (C) at ({(cos(\x)+sqrt(4-(sin(\x))^2))*cos(\x)
-((-2/3)*(cos(\x)-sqrt(4-(sin(\x))^2))*((55/16+cos(2*\x)+2*cos(\x)*sqrt(4-(sin(\x))^2))^.5*(sqrt(4-(sin(\x))^2))
+(3/4)*sin(\x))
*((1/4)*((12-cos(2*\x)+2*cos(\x)*sqrt(4-(sin(\x))^2))^.5*cos(\x)+(cos(\x)-sqrt(4-(sin(\x))^2))*sin(\x)))},
{(cos(\x)+sqrt(4-(sin(\x))^2))*sin(\x)
-((-2/3)*(cos(\x)-sqrt(4-(sin(\x))^2))*((55/16+cos(2*\x)+2*cos(\x)*sqrt(4-(sin(\x))^2))^.5*(sqrt(4-(sin(\x))^2))
+(3/4)*sin(\x))
*((1/4)*((12-cos(2*\x)+2*cos(\x)*sqrt(4-(sin(\x))^2))^.5*sin(\x)-(cos(\x)-sqrt(4-(sin(\x))^2))*cos(\x)))});
\fill (A) circle (1pt);
\fill (B) circle (1pt);
\fill (C) circle (1pt);
\draw (A)--(B)--(C)--(A);
}
\node at (1,-2.5) {A Poncelet Family};
\end{tikzpicture}\]
As mentioned earlier, one of Schubert's goals was to enumerate
the families arising when the two circles are replaced by algebraic curves,
a generalization of this problem. 
At any rate, it is clear that any triangle appears in some
Poncelet family, since we can always construct
an incircle and outcircle for a triangle. 
Since Chapple's formula is homogeneous,
these statements apply to similarity classes.
We note immediately that up to similarity
the incircle and outcircle configuration is determined by
the triangle class, and so distinct Poncelet families of classes 
will have no common elements.

Our paper started with the idea of comparing the different Poncelet families 
of classes
by graphing them in a space of triangle classes called the
{\it triangle of triangles}, which is the plane 
$\alpha+\beta+\gamma=\pi$ in the first octant of
$\alpha\beta\gamma$-space,
where $\alpha,\beta,\gamma$ are interior angles:
\[
\tdplotsetmaincoords{60}{120}
\begin{tikzpicture}[scale=1.75,tdplot_main_coords]
\coordinate (A) at (0,0,0); 
\coordinate (B) at (1,0,0); 
\coordinate (D) at (0,1,0); 
\coordinate (E) at (0,0,1); 

\draw[->,-latex] (A)--(1.7,0,0) node[left] {$\bs\alpha$};
\draw[->,-latex] (A)--(0,1.7,0) node[right] {$\bs\beta$};
\draw[->,-latex] (A)--(0,0,1.3) node[left] {$\bs\gamma$};
\fill (A) circle (.5pt);
\fill (B) circle (.5pt) node[left] {\footnotesize $(\pi,0,0)$};
\fill (D) circle (.5pt) node[right] {\footnotesize $(0,\pi,0)$};
\fill (E) circle (.5pt) node[left] {\footnotesize $(0,0,\pi)$};
\fill[yellow, opacity=0.5] (1,0,0) -- (0,1,0) -- (0,0,1) -- cycle;
\draw[thick] (B)--(D)--(E)-- cycle;
\node at (1,1,-.2) {The Triangle of Triangles};
\draw (.2,.9,.2) arc (270:180:.5 and .1) node[right] {\footnotesize $\bs\alpha+\bs\beta+\bs\gamma=\pi$};
\end{tikzpicture}
\]
By \cite[Theorem 293(b)]{J60} 
we find $r/R=4\sin(\alpha/2)\sin(\beta/2)\sin(\gamma/2)$. 
Since $r/R$ uniquely identifies a family, 
distinct Poncelet families
form distinct level curves for this function on the triangle of triangles,
illustrated below. 
This shows how distinct families
are related, and how the (disjoint) union of distinct
families partitions the triangle of triangles. 
The centroid corresponds to $r/R=1/2$,
where the incircle and outcircle are concentric, and the ``family'' consists of a
singleton, the equilateral class.
\begin{figure}[ht]
  \includegraphics[angle=-30,width=0.3\textwidth]{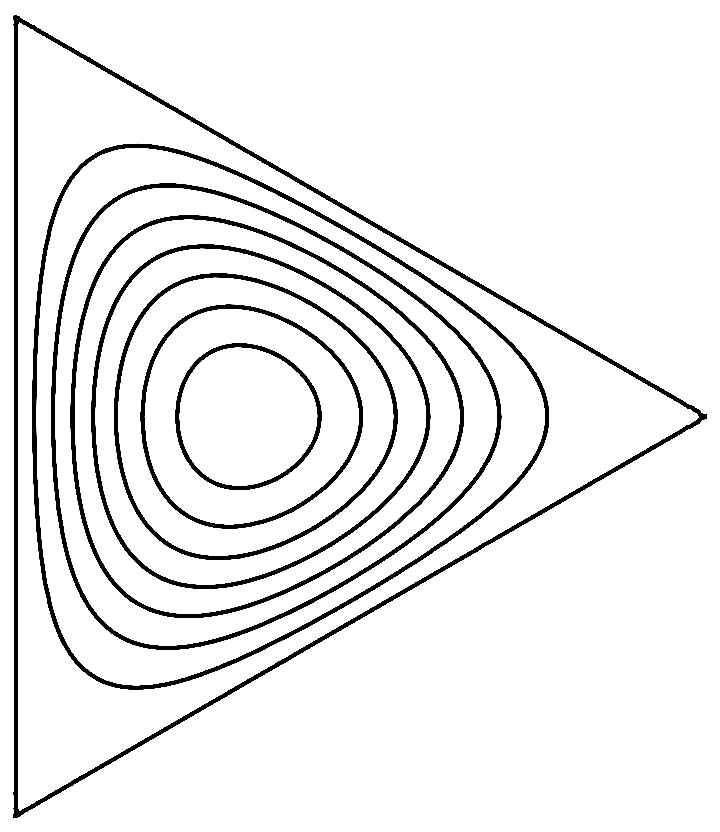}
\captionsetup{labelformat=empty, skip=-35pt}
\caption{\small Family of Poncelet Families $\hphantom{qquad}$}
\end{figure}

The points on the border of the triangle of triangles are ``degenerate''
triangle classes, marked by exactly one zero-entry in 
triple $(\alpha,\beta,\gamma)$, except at the three vertices, 
where there are two. 
The former will be called double points below,
because they are represented by triangles with a doubled vertex. 
The fact that the triangle of triangles only represents these degenerates 
means that it is geometrically incomplete, as we now indicate.

\subsection{\bf Collinear Classes}
A (degenerate) triangle is {\it collinear} if its edges are collinear.
The triangle of triangles is good for describing Poncelet
families, since the classes are described completely by 
$(\alpha,\beta,\gamma)$.
But this variable groups all collinear classes into one class,
since they are all $(0,0,0)\pmod\pi$, 
whereas we can find properly disjoint families 
that degenerate to collinear classes, which should then be considered distinct:
Figure~\ref{constantsideratioprelim}
shows a family of isosceles classes on the left,
and a family with fixed side ratio of $(2:1)$ on the right.
\begin{equation}\label{constantsideratioprelim}
\begin{minipage}{.3\textwidth}
\centering
\begin{tikzpicture}[scale=.9]
\def\radius{1.3}
\coordinate (target) at ({\radius*cos(180)},{\radius*sin(180)}); 
\coordinate (target2) at ({\radius*cos(0)},{\radius*sin(0)}); 
\foreach \height in {-1.3,-.7,-.4,-.15,0,.15,.4,.7,1.3}{
\coordinate (vertex) at (0,{\height});
\fill (vertex) circle (1pt);
\fill (-\radius,0) circle (1pt);
\draw (vertex) -- (target);
\draw (vertex) -- (target2);
\draw[dashed] (0,-1.3)--(0,1.3);
\draw[thick] (target)--(target2);
}
\fill (target) circle (1.5pt);
\fill (target2) circle (1.5pt);
\fill (0,0) circle (1.5pt);
\node[below right] at (target2) {\footnotesize $C$};
\node[below left] at (target) {\footnotesize $B$};
\node[above] at ({\radius*cos(90)},{\radius*sin(90)}) {\footnotesize $A$};
\node at (0,-2) {Family of Isosceles Classes};
\end{tikzpicture}
\end{minipage}
\begin{minipage}{.5\textwidth}
\begin{tikzpicture}[scale=.9]
\def\radius{1.3}
\draw[line width=0.2mm, domain=-180:180, smooth, variable=\t, dashed] plot ({\radius*cos(\t)},{\radius*sin(\t)}); 
\coordinate (target) at ({-2*\radius},0); 
\coordinate (target2) at ({-\radius/2},0); 
\foreach \angle in {-170,-160,-140,-110,0,-45,45,110,140,160,170}{
\coordinate (vertex) at ({\radius*cos(\angle)},{\radius*sin(\angle)});
\fill (vertex) circle (1pt);
\fill ({\radius},0) circle (.5pt);
\draw (vertex) -- (target);
\draw (vertex) -- (target2);
\draw[thick] (target)--(target2);
}
\fill (target) circle (1.5pt);
\fill (target2) circle (1.5pt);
\fill ({-\radius},0) circle (1.5pt);
\node[below right] at (target2) {\footnotesize $C$};
\node[below left] at (target) {\footnotesize $B$};
\node[above right] at ({\radius*cos(45)},{\radius*sin(45)}) {\footnotesize $A$};
\node at (-.65,-1.9) {Family of Scalene $(2:1)$ Edge-Ratio Classes};
\node at (0,1.7) {${}$};
\end{tikzpicture}
\end{minipage}
\end{equation}
In our view these families are disjoint because they have different 
constant side-ratios.
But the triangle of triangles identifies
the two collinear classes $BC$ (in bold) in each family.
A geometrically correct model would separate them. 
This 
failure to track the geometry of degenerating families
shows it is not a moduli space of triangle classes.
We need to improve the geometric variable.

\section{\Large Definitions and Notation}\label{dn}

\subsection{\bf Triangles}

Our goal is to construct the space of similarity classes of triangles.
A triangle is fundamentally three vertices in general position
in the plane.
But as indicated above, 
a proper space of triangles must correctly track continuous families,
and to correctly track continuous families it must properly distinguish limiting,
``degenerate'' configurations in which edges are collinear and vertices coincide.
Our challenge 
is to design a geometric variable that is robust enough to 
detect the distinctions we need, 
even if it contains information that is redundant on ordinary triangles.

The representation of a possibly-degenerate triangle by a point in space like this
is explored in Semple's {\it The triangle as a geometric variable} \cite{Semple},
and is traceable to Schubert.
Below we first define the variable for the space of triangles,
and then define a quotient variable for classes.

\begin{Definition}\label{rho}\rm
The {\it principal argument} $\Arg(\a)$ of a nonzero complex number $\a\in\C$ is 
the unique real number in the interval $(-\pi,\pi]$ such that $\a=|\a|e^{\Arg(\a)\i}$. 
Let $\rho$ denote the isomorphism
\begin{align*}
\rho:\P^1(\R)&\lr\R/\pi\\
[x,y]&\lm\Arg(x+y\i)\pmod\pi 
\end{align*}
For nonzero $\a\in\C$ write $\xi_\a=\xi_{-\a}=\rho([\re(\a),\im(\a)])=\Arg(\a)\pmod\pi$.
\end{Definition}

\begin{Definition}[Triangles]\label{triangles}\rm

\noindent (a) 
A {\it labeled, oriented, possibly-degenerate triangle} is
represented by the {\it geometric variable} 
\[\left(B_0;(\a,\b,\c);[a_1,a_2,b_1,b_2,c_1,c_2];(\xi_\a,\xi_\b,\xi_\c)\right)\in\C_0\times\C^3\times\P^5(\R)\times(\R/\pi)^3\]
where $B_0$ is a {\it basepoint}, and
$\a,\b,\c\in\C$ are three ordered {\it side-vectors} 
satisfying $\a+\b+\c=\0$.
The extra data are determined by the side-vectors if they form
a conventional triangle, and satisfy
\begin{enumerate}[\quad\rm (i)]
\item\label{i}
If $(\a,\b,\c)\neq(\0,\0,\0)$ then $[a_1,a_2,b_1,b_2,c_1,c_2]
=[\re(\a),\im(\a),\re(\b),\im(\b),\re(\c),\im(\c)]$. 
\item\label{ii}
If $(\a,\b,\c)=(\0,\0,\0)$ then $[a_1,a_2,b_1,b_2,c_1,c_2]$ satisfies $a_1+b_1+c_1=a_2+b_2+c_2=0$. 
\item\label{iii}
$a_1+a_2\i\neq 0\implies\xi_\a=\Arg(a_1+a_2\i)\pmod\pi$,
and similarly for the others.
\item\label{iv}
If $a_1+a_2\i=0$ then $\xi_\a\in\R/\pi$ is arbitrary, and
similarly for $\xi_\b,\xi_\c$.
\end{enumerate}

\noindent (b)
The {\it vertices} are $(A,B,C)=(B_0-\c,B_0,B_0+\a)$,
so $\a=C-B$, $\b=A-C$, and $\c=B-A$.

\noindent(c)
The {\it interior angles} $(\alpha,\beta,\gamma)\in(\R/\pi)^3$ are given by the ``vector product'' relation
\begin{equation}\label{vector}
\br{\xi_\a,\xi_\b,\xi_\c}\times \br{1,1,1}=\br{\alpha,\beta,\gamma}=\br{\xi_\b-\xi_\c,\,\xi_\c-\xi_\a,\,\xi_\a-\xi_\b}\;\pmod\pi
\end{equation}

\noindent(d)
A triangle is {\it degenerate} if its side-vectors are either parallel or zero.
Equivalently, its vertices are collinear, possibly with multiplicities.
Otherwise it is {\it nondegenerate}.
A degenerate triangle is a
\begin{enumerate}[\quad\rm (i)]
\item
simple point if $\a\b\c\neq\0$.
\item
double point if $\a\b\c=\0$ and $(\a,\b,\c)\neq(\0,\0,\0)$.
\item
triple point if $(\a,\b,\c)=(\0,\0,\0)$.
\end{enumerate}
\begin{equation}\label{types}
\begin{minipage}{0.3\textwidth}
\begin{tikzpicture} 
\node (A) at (1,0){};
\node (B) at (3,0) {};
\node (C) at (3.5,1.5) {};
\fill (A) circle (1pt);
\fill (B) circle (1pt);
\fill (C) circle (1pt);
\draw[->,-latex](B)--(C);
\draw[->,-latex] (C)--(A);
\draw[->,-latex] (A)--(B);
\node[below] at (A) {\footnotesize $B$};
\node[below] at (B) {\footnotesize $C$};
\node[right] at (C) {\footnotesize $A$};
\node at (2.2,-.8) {Nondegenerate};
\end{tikzpicture}
\end{minipage}
\begin{minipage}{0.3\textwidth}
\begin{tikzpicture} 
\node (A) at (1,0){};
\node (B) at (3,0) {};
\node (C) at (4,0) {};
\fill (A) circle (1pt);
\fill (B) circle (1pt);
\fill (C) circle (1pt);
\draw[->,-latex]([yshift=1]2.9,0)--([yshift=1]1.1,0);
\draw[->,-latex]([yshift=1]3.9,0)--([yshift=1]3.1,0);
\draw[->,-latex]([yshift=-2]1.1,0)--([yshift=-1]3.9,0);
\node[below] at (A) {\footnotesize $B$};
\node[below] at (B) {\footnotesize $A$};
\node[below] at (C) {\footnotesize $C$};
\node at (2.5,-.8) {Simple Point};
\fill (2.5,-1.8) circle (1pt);
\draw (2.5,-1.8) circle (2pt);
\draw (2.5,-1.8) circle (3pt);
\node at (2.5,-2.3) {Triple Point};
\end{tikzpicture}\end{minipage}
\begin{minipage}{0.3\textwidth}
\begin{tikzpicture} 
\node (A) at (1,0){};
\node (B) at (3,0) {};
\fill (A) circle (1pt);
\fill (B) circle (1pt);
\draw (B) circle (2pt);
\draw[->,-latex]([yshift=1.2]2.9,0)--([yshift=1.2]1.1,0);
\draw[->,-latex]([yshift=-1.2]1.1,0)--([yshift=-1.2]2.9,0);
\draw[->,-latex] (B)--({3+.5*cos(45)},{.5*sin(45)});
\draw[->,-latex] (B)--({3+.5*cos(135)},{.5*sin(135)});
\draw[->,-latex] (B)--({3+.5*cos(0)},{.5*sin(0)});
\draw[->,-latex] (B)--({3+.5*cos(90)},{.5*sin(90)});
\node[below] at (A) {\footnotesize $B$};
\node[below] at (B) {\footnotesize $A=C$};
\node at (2,-.8) {Double Point};
\end{tikzpicture}
\end{minipage}
\end{equation}
See Subsection~\ref{discussion} for more discussion.

\noindent (e)
A nondegenerate triangle has {\it positive orientation} if the directed graph
defined by its side-vectors $(\a,\b,\c)$
goes counterclockwise in the plane, and {\it negative orientation} if it goes clockwise. 
Equivalently (Lemma~\ref{rep} below), it has positive orientation if its interior angles have representatives in $[0,\pi)$
adding to $\pi$, and
negative orientation if they have representatives in $(-\pi,0]$ adding to $-\pi$.
A degenerate triangle has {\it zero orientation}.
\[
\qquad\qquad\begin{minipage}{.4\textwidth}
\centering
\begin{tikzpicture}[scale=.65]
\draw[->,-latex] (0,0)--(3.5,1); 
\draw[->,-latex] (3.5,1)-- (2,3); 
\draw[->,-latex] (2,3)--(0,0); 
\node[right] at (3.5,1) {$C$};
\node[above] at (2,3) {$A$};
\node[left] at (0,0) {$B$};
\node[right] at (2.8,2) {$\b$};
\node[left] at (1,1.5) {$\c$};
\node[below] at (1.75,.5) {$\a$};
\node at (.7,.5) {$\beta$};
\node at (2,2.3) {$\alpha$};
\node at (2.8,1.2) {$\gamma$};
\draw[domain=atan(1/3.5):atan(3/2),->,-latex] plot ({.6*cos((\x))},{.6*sin((\x))}); 
\draw[domain=-123.690068:atan(-4/3),->,-latex] plot ({2+.5*cos((\x))},{3+.5*sin((\x))}); 
\draw[domain=126.869898:195.945396,->,-latex] plot ({3.5+.5*cos((\x))},{1+.5*sin((\x))}); 
\node at (1.75,-.5) {Positively Oriented};
\end{tikzpicture}
\end{minipage}
\begin{minipage}{.4\textwidth}
\centering
\begin{tikzpicture}[scale=.65]
\draw[->,-latex] (3.5,1)--(0,0);
\draw[->,-latex] (2,3)--(3.5,1);
\draw[->,-latex] (0,0)--(2,3);
\node[right] at (3.5,1) {$B$};
\node[above] at (2,3) {$A$};
\node[left] at (0,0) {$C$};
\node[right] at (2.8,2) {$\c$};
\node[left] at (1,1.5) {$\b$};
\node[below] at (1.75,.5) {$\a$};
\node at (.7,.5) {$\gamma$};
\node at (2,2.3) {$\alpha$};
\node at (2.8,1.2) {$\beta$};
\node at (1.75,-.5) {Negatively Oriented};
\draw[domain=atan(3/2):atan(1/3.5),->,-latex] plot ({.6*cos((\x))},{.6*sin((\x))});
\draw[domain=atan(-4/3):-123.690068,->,-latex] plot ({2+.5*cos((\x))},{3+.5*sin((\x))}); 
\draw[domain=195.945396:126.869898,->,-latex] plot ({3.5+.5*cos((\x))},{1+.5*sin((\x))}); 
\end{tikzpicture}
\end{minipage}
\]
\end{Definition}

\prob{Notation.}
Let $X,Y,Z$ be coordinate functions on $\C^3$,
and let \[\X=\V(X+Y+Z)\subset\C^3\,,\] the space of side-vectors $(\a,\b,\c)$ in Definition~\ref{triangles}.
Denote the set of all labeled, oriented (possibly-degenerate) triangles by
\[\TT=\left\{
\left(B_0;(\a,\b,\c);[a_1,a_2,b_1,b_2,c_1,c_2];(\xi_\a,\xi_\b,\xi_\c)\right)
\right\}\;\subset\C_0\times\X\times\P^5(\R)\times(\R/\pi)^3\]
and by 
$\TT_{\rm ndgn},\;\TT_{\rm smp},\; \TT_{\rm dbl},\; \TT_{\rm tpl}$
the nondegenerate, simple, double, and triple points.

\subsection{\bf Similarity Classes}\label{subsimilarity}
Let $P,Q,R$ be coordinate functions on $(\R/\pi)^3$,
and set 
\[\P(\X)\subset\P^2(\C),\quad\T=\V(P+Q+R)\subset(\R/\pi)^3\]
Since $\X\subset\C^3$ is a plane, we have $\P(\X)\isom\P^1(\C)$.

\begin{Definition}[Similarity]\label{similarity}\rm
Labeled oriented triangles $T,T'\in\TT$ are {\it similar} if
\begin{align*}
[a_1+a_2\i,b_1+b_2\i,c_1+c_2\i]&=[a_1'+a_2'\i,b_1'+b_2'\i,c_1'+c_2'\i]\quad\text{in $\P(\X)$}\\
(\alpha,\beta,\gamma)&=(\alpha',\beta',\gamma')\quad\text{in $\T$}
\end{align*}
This is the standard definition if $T$ is nondegenerate.
Represent the class of a triangle $T$ by 
\[[T]\,=\,([a_1+a_2\i,b_1+b_2\i,c_1+c_2\i];(\alpha,\beta,\gamma))\;\in\P(\X)\times\T\]
Denote the set of all similarity classes of labeled, oriented, 
possibly-degenerate triangles by
\[\DD:=\{[T]:T\in\TT\}\subset\P(\X)\times\T \]
Let $\pi_{\P(\X)}:\DD\to\P(\X)$ and $\pi_{\T }:\DD\to\T $ denote the canonical projections, and define
\begin{align*}
\pi_\DD:\TT&\lr\DD\\
\left(B_0;(\a,\b,\c);[a_1,a_2,b_1,b_2,c_1,c_2];(\xi_\a,\xi_\b,\xi_\c)
\right)&\lm\left([a_1+a_2\i,b_1+b_2\i,c_1+c_2\i];(\alpha,\beta,\gamma)\right)
\end{align*}
\end{Definition}

\begin{Remark}
\cite{Semple} and \cite{ColFul89} study triangles
in $\P^2(\R)$, which are defined by lines and vertices.
Since $\P^2(\R)$ doesn't have a metric 
extending the Euclidean metric on $\R^2$, it doesn't have a
corresponding notion of similarity.
This might explain why \cite{Semple}, \cite{ColFul89} and others
don't discuss similarity classes.
\end{Remark}

\subsection{\bf Families}\label{varfamilies}
The geometric variables of Definition~\ref{triangles} define families
as follows.
\begin{Definition}\label{varfam}\rm
\begin{enumerate}[\rm (a)]
\item
A {\it family of triangle classes} (resp. {\it triangles}) with base $B\in\Var{\R}$
is a morphism $f:B\to\P(\X)\times\mbb T$ (resp. $f:B\to\C_0\times\X\times\P^5(\R)\times(\R/\pi)^3$)
whose image is in the set $\DD$ (resp. $\TT$). We write $f:B\to\DD$ (resp. $f:B\to\TT$).
\item
If $m:B\to C$ is a morphism of $\R$-varieties, the {\it pullback} (to $B$) of
a family $f:C\to\DD$ (resp. $f:C\to\TT$) is the composition of morphisms 
$m^*(f)=f\circ m$ (in each case).
\item
Let $\scr F_\DD$ (resp. $\scr F_\TT$) be the functor that assigns to each $\R$-variety $B$
the set of families $B\to\DD$ (resp. $\scr F_\TT$), and to a morphism $m:B\to C$ the
pullback $m^*:f\mapsto f\circ m$ (in each case).
\end{enumerate}
\end{Definition}

In Section~\ref{mss}
we will prove these functors are representable by smooth $\R$-varieties,
after showing in Sections~\ref{st},\ref{sc} that the subsets $\TT\subset\C_0\times\X\times\P^5(\R)\times(\R/\pi)^3$ 
and $\DD\subset\P(\X)\times\mbb T$ are blowups of 
$\C_0\times\X$ and $\P(\X)$, respectively.


\subsection{\bf Geometric Explanations}\label{discussion}
In this subsection we identify higher-order degenerations defined by the geometric variable,
prove our geometric variables are comprehensive,
explain the geometry of the free argument at a double point, 
and justify the inclusion of orientation.

\subsubsection{\it Degenerations} 
The degenerate triangle types are stratified as follows according to their (real) generic dimensions, their
specializations, and their generic vertex and line multiplicities, which
we denote by $(m,n)$. We abuse the terms simple and double to accomodate
the limit cases, and add the word ``ordinary'' when referring to the strict definition.
\begin{enumerate}[I.]
\item\label{I}
\ul{Generic Dimension Six}.
\begin{enumerate}[{$*$}]
\item
Unrestricted  (unres) $(1,1)$. 
$\left(B_0;(\a,\b,\c);[a_1,a_2,b_1,b_2,c_1,c_2];
(\xi_\a,\xi_\b,\xi_\c)\right)$.
\end{enumerate}
\item\label{II}
\ul{Generic Dimension Five}.
\begin{enumerate}[{$*$}]
\item
Simple  (smp) $(1,3)$. 
$a_1b_2-a_2b_1=0$. 
\item
Triple  (tpl) $(3,1)$. 
$(\a,\b,\c)=(\0,\0,\0)$.
\item
Double  (dbl) $(2,2)$. 
$(a_1+a_2\i)(b_1+b_2\i)(c_1+c_2\i)=\0$.
\end{enumerate}
\item\label{III}
\ul{Generic Dimension Four}.
\begin{enumerate}[{$*$}]
\item
Tripled Simple  (tpl-smp) $(3,3)$. 
$(\a,\b,\c)=(\0,\0,\0)$ and $a_1b_2-a_2b_1=0$.
\item
Doubled Simple  (dbl-smp) $(2,3)$. 
$a_1b_2-a_2b_1=0$ and $(a_1+a_2\i)(b_1+b_2\i)(c_1+c_2\i)=\0$.
\item
Tripled Double  (tpl-dbl) $(3,2)$. 
$(\a,\b,\c)=(\0,\0,\0)$ and $(a_1+a_2\i)(b_1+b_2\i)(c_1+c_2\i)=\0$.
\end{enumerate}
\item\label{IV}
\ul{Generic Dimension Three}. 
\begin{enumerate}[{$*$}]
\item
Tripled Doubled Simple  (tpl-dbl-smp) $(3,3)$. 
$(\a,\b,\c)=(\0,\0,\0)$,
$a_1b_2-a_2b_1=0$, and
$(a_1+a_2\i)(b_1+b_2\i)(c_1+c_2\i)=\0$.
\end{enumerate}
\end{enumerate}
Specializations:
\[\begin{tikzcd}& &\text{\rm unres}\arrow[dl,-latex]\arrow[d,-latex]\arrow[dr,-latex] &\\
&\text{\rm smp}\arrow[d,-latex]\arrow[dr,-latex]&\text{\rm tpl}\arrow[dl,-latex]\arrow[dr,-latex]&\text{\rm dbl}\arrow[dl,-latex]\arrow[d,-latex]\\
&\text{\rm tpl-smp}\arrow[dr,-latex]&\text{\rm dbl-smp}\arrow[d,-latex]&\text{\rm tpl-dbl}\arrow[dl,-latex]\\
& &\text{\rm tpl-dbl-smp} &
\end{tikzcd}\]

This list closely resembles the one in \cite{Semple} and \cite{ColFul89},
which is based on a variable
that uses a (possibly degenerate) circumscribing conic to
model different approaches to the different types of degenerate triangles.

The dimensions count degrees of freedom under the listed constraints.
If $p\in\TT$ is not a triple point, i.e., $(\a,\b,\c)\neq(\0,\0,\0)$,
then $(\a,\b,\c)$ determines $[a_1,a_2,b_1,b_2,c_1,c_2]$ and
the arguments corresponding to nonzero side-vectors
by Definition~\ref{triangles}(a)\eqref{iii}. 
Since $\a+\b+\c=\0$ and the basepoint is unrestricted, the generic dimension is six.
The side-vectors of a simple point are collinear, and the single restricting equation reduces the dimension to five.
One of the side-vectors of a double point is zero, and since the corresponding argument is 
then unrestricted, the dimension is also five.
The side-vectors of a triple point are all zero but the $\P^5(\R)$-component is unrestricted, so the dimension
drops by one, from six to five.
Intersecting these conditions gives the dimension-four points, and 
intersecting the conditions on the dimension-four points
gives the dimension-three points.
The dimension counts, which we won't prove, are not needed in the sequel.

\subsubsection{\it Necessity and Sufficiency of the Geometric Variables}

\begin{Proposition}\label{prop}
The geometric variable of Definition~\ref{triangles}
is necessary and sufficient to distinguish all (possibly-degenerate) triangles in the plane.
\end{Proposition}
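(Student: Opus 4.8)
The plan is to prove the two assertions separately: \emph{sufficiency}, that the variable separates every pair of distinct possibly-degenerate triangles (including all degenerate limits), and \emph{necessity}, that no component may be discarded without conflating some pair of genuinely distinct degenerations. For sufficiency I would invoke the $n=3$ theory of nested screens of Fulton--MacPherson \cite{FM94}, which by \cite[Theorem 4]{FM94} records exactly the geometrically distinct ways three labeled points in the plane can collide, and show that the geometric variable of Definition~\ref{triangles} both determines and is determined by the full screen datum of the corresponding vertex configuration. For necessity I would exploit the product structure $\DD\subset\P(\X)\times\T$ together with the projections $\pi_{\P(\X)}$ and $\pi_\T$, and the discriminating families already displayed in \eqref{constantsideratioprelim}.

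For sufficiency, first I would set up the dictionary between the components of the variable and the screen data. The basepoint $B_0$ together with the side-vectors $(\a,\b,\c)$ records the base configuration of vertices $(A,B,C)=(B_0-\c,B_0,B_0+\a)$; this alone separates all nondegenerate triangles and, through the magnitude of the side-vectors, all scalings. When $\a\b\c=\0$ but $(\a,\b,\c)\neq(\0,\0,\0)$ (a double point), a single side-vector vanishes and its direction is lost from $(\a,\b,\c)$, but by Definition~\ref{triangles}(a)\eqref{iv} it is retained in the free argument $\xi$ of that side, matching the screen that records the direction of the corresponding pairwise collision. When $(\a,\b,\c)=(\0,\0,\0)$ (a triple point), all side-vectors vanish but the $\P^5(\R)$-component retains the limiting \emph{shape} --- the ratios of the collapsing vectors --- which is precisely the screen of a triple collision, a configuration of three points up to translation and scaling. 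I would then check that these correspondences are mutually inverse on each stratum of the specialization diagram, so that knowing the variable is equivalent to knowing the full nested screen; sufficiency follows because FM screens separate all degenerations.

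For necessity I would argue that each factor of the class $[T]=([\,\cdots\,];(\alpha,\beta,\gamma))\in\P(\X)\times\T$ is indispensable, from which necessity of the finer triangle variable follows \emph{a fortiori}. The projection $\pi_\T$ forgets the side-ratio shape and retains only the angles; as the two families of \eqref{constantsideratioprelim} show, it conflates the distinct collinear limits of the isosceles and the $(2:1)$-scalene families, so the $\P(\X)$-factor cannot be dropped. Symmetrically, $\pi_{\P(\X)}$ forgets the angles and retains only the shape; here one exhibits a pair of families (as in the mismatch of the sphere and torus degenerations discussed in Section~\ref{mdc}) whose members share a limiting shape but realize distinct angle-limits in $\T$, so the $\T$-factor cannot be dropped either. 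Finally, for the triangle variable in $\TT$ rather than the class, $B_0$ separates translates and the magnitude of $(\a,\b,\c)$ separates scalings, and both are plainly required.

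The delicate point, and the step I expect to be the main obstacle, is the sufficiency argument at the deepest strata --- the higher-order degenerations III and IV (the tripled, doubled, and simple combinations) --- where the limiting shape recorded by the $\P^5(\R)$-component is itself degenerate and the screens are layered. The hard part will be verifying that the \emph{joint} datum of a degenerate shape point together with the free arguments $\xi$ faithfully reproduces the full nested screen, with no screen lost and none double-counted; concretely, one must track how a triple point whose limiting shape is collinear (tpl-smp), or whose limiting vertices further coincide (tpl-dbl), is recorded simultaneously by $\P^5(\R)$ and by the $\xi$'s, and confirm that the specialization diagram of the degeneration types matches the incidence of FM screens exactly.
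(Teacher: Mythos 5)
Your sufficiency argument follows essentially the same route as the paper: both set up a dictionary between the components of the geometric variable and the $n=3$ Fulton--MacPherson screen data, matching the free argument at a double point to the two-point screen, the $\P^5(\R)$-component at a triple point to the three-point screen, and the combination of both to the two-screen tree of a tripled double point. The obstacle you flag at the deepest strata is resolved in the paper by a simple observation you could have made yourself: the screen stratification is \emph{coarser} than the specialization diagram in Subsection~\ref{discussion}. A tripled simple point is just a triple point whose single three-point screen happens to be collinear, and a doubled simple point is just an ordinary double point; neither acquires an extra screen. So the only cases requiring separate treatment are nondegenerate/ordinary simple, ordinary double, ordinary triple, and tripled double, exactly as the paper enumerates, and no layering subtlety arises beyond the tpl-dbl case you already describe correctly.

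Where you genuinely diverge is on ``necessity.'' The paper reads necessity as the statement that every \emph{admissible value} of the variable determines a unique configuration (so that, paired with sufficiency, one gets a bijection between admissible values and configurations); it proves this stratum by stratum alongside sufficiency. You instead read necessity as indispensability of each component, and prove it by exhibiting discriminating families --- the $\pi_\T$ versus $\pi_{\P(\X)}$ projections together with \eqref{constantsideratioprelim} and the Section~\ref{mdc} examples. Your reading is arguably closer to the plain sense of ``necessary,'' and your argument is sound as far as it goes, but be aware of two things. First, your necessity argument is carried out at the level of the class variable in $\P(\X)\times\T$ and only gestures at the triangle variable; the indispensability of the free arguments $\xi$ at double points (as opposed to the angle triple downstream) deserves its own sentence, which \eqref{family2} supplies. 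Second, under your reading you have not established the injectivity half of the bijection that the paper needs later (for instance in Theorem~\ref{Theta}, which cites this proposition), so if you adopt your version you must still verify that distinct admissible values give distinct configurations --- which is in fact what your sufficiency dictionary, once checked to be mutually inverse on each stratum, already delivers.
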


\begin{proof}
The first two components of any $p\in\TT$ determine three vertices in the plane
by Definition~\ref{triangles}(b).
A (possibly degenerate) triangle in the plane is a (possibly degenerate) configuration
of these three points in the sense of \cite{FM94}. 
Such configurations are enumerated in 
the first geometric description of \cite[Section 1]{FM94},
which is based on {\it screens.}
We will show there is a one-to-one correspondence between these configurations
and the values of our geometric variable.

Screens are designed to expand out the infinitesimal behavior of a given degenerate
configuration of points. Briefly, 
every point $p$ of multiplicity $m>1$ in a configuration is a basepoint for a tree of screens.
The first screen at $p$ 
is an equivalence class of configurations of $m$ points in the tangent space at $p$,
where equivalence is defined by translation and (nonzero) real scaling.
The tangent space configuration must have at least two distinct points,
and can have points with nontrivial multiplicities.
If it has a point of nontrivial multiplicity,
this becomes the basepoint for a subsequent screen, and the process continues until all points
have multiplicity one.

Thus every point of multiplicity $m$ in the original configuration has a finite tree
of at most $m-1$ screens.
The totality of this data, an original configuration and its trees of screens
at points of nontrivial multiplicity,
separates and classifies possibly-degenerate configurations by \cite{FM94}.

Three-point configurations have either no screens,
a single screen at a double or triple
point, or two screens at a triple point.
More specifically:
\begin{enumerate}[\rm (1)]
\item
Nondegenerate and ordinary simple point configurations have no screens.
\item
Ordinary double point configurations are characterized by a single screen of two 
distinct points, encoding an ``approach direction''.
\item
Ordinary triple point configurations are characterized by a single screen of three 
distinct points, defined up to translation and scaling, encoding two approach directions
with relative approach rates.
\item
Tripled double point configurations are triple point configurations with two screens. 
The first contains a double point and a third point,
which gives an approach direction for the double point and the
third point,
and the second, based at the double point, is a double point screen,
giving an approach direction for the double point.
\end{enumerate}

To show the necessity of
our geometric variable is to show every admissible value of the four components under the constraints
of Definition~\ref{triangles} determines a unique configuration.
To show sufficiency is to show every configuration determines a unique value.

Nondegenerate and ordinary simple points $p\in\TT_{\rm ndgn}\sqcup\TT_{\rm smp}$, 
defined as those with $\a\b\c\neq\0$, are determined by
the $\C_0$ and $\X$-components.
Therefore admissible values of $p$ in this case 
are in one-to-one correspondence with nondegenerate or ordinary simple point configurations 
by Euclid's side-side-side theorem.

Ordinary double points $p\in\TT_{\rm dbl}$, defined as those with $\a\b\c=\0$ and $(\a,\b,\c)\neq(\0,\0,\0)$, are 
determined by the $\C_0$ and $\X$-components together with the single free argument corresponding
to the zero side-vector.
Ordinary double point configurations are characterized by one screen consisting of two points,
defined up to translation and scaling.
Since approach directions are uniquely determined by an argument in $\R/\pi$, they
are in one-to-one correspondence with admissible values of $p$.
We will explain how the free argument encodes the approach direction at the zero side vector in more detail in Subsubsection~\ref{doublepoints}.

Ordinary triple points $p\in\TT_{\rm tpl}$, defined as those with $(\a,\b,\c)=(\0,\0,\0)$
and $(a_1+a_2\i)(b_1+b_2\i)(c_1+c_2\i)\neq\0$,
are determined by the $\C_0$, $\X$, and $\P^5(\R)$-components, and 
the three nonzero pairs $(a_1,a_2),(b_1,b_2),(c_1,c_2)$ in the 
$\P^5(\R)$ component are in one-to-one correspondence with triple point configuration screens,
since the latter are characterized by three points in the plane
modulo translation and real scaling.

Tripled double points $p\in\TT_{\rm tpl-dbl}$, defined as those with $(\a,\b,\c)=(\0,\0,\0)$
and $(a_1+a_2\i)(b_1+b_2\i)(c_1+c_2\i)=\0$,
are determined by the $\C_0,\X$, and $\P^5(\R)$-components, together with the single
free argument corresponding to the zero alphabetical pair in the $\P^5(\R)$ component.
These are in one-to-one correspondence with the tripled double point configurations,
with the two (opposite) nonzero alphabetical pairs in the $\P^5(\R)$-component 
corresponding to the first screen, giving the approach direction for the double point
and the third point; and the free argument giving the approach direction that
characterizes the double point configuration in the second screen.

Since all points $p\in\TT$ and all configurations 
are either nondegenerate, ordinary simple, ordinary double,
ordinary triple, or tripled-double, we conclude the data in our geometric variable is 
necessary and sufficient to account for
all degenerate three-point configurations.
\end{proof}

\begin{Corollary}\label{propcor}
The geometric variable $([a_1+a_2\i,b_1+b_2\i,c_1+c_2\i];(\alpha,\beta,\gamma))$
of Definition~\ref{similarity}
is necessary and sufficient to describe all similarity classes of triangles, partitioning them into
four categories: Nondegenerate, (ordinary) simple, (ordinary) double, and doubled simple.
\end{Corollary}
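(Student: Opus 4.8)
The plan is to obtain the Corollary from Proposition~\ref{prop} by pushing the configuration classification through the class map $\pi_\DD$ and reading off the strata of $\DD$. The class variable is built from the full geometric variable by discarding the basepoint $B_0$ and projectivizing the complex side-vector data; concretely, an orientation-preserving plane similarity $z\mapsto\mu z+\tau$ (with $\mu\in\C^\times$, $\tau\in\C$) multiplies $(\a,\b,\c)$ by $\mu$, shifts $B_0$ by $\tau$, and shifts every argument $\xi_\a,\xi_\b,\xi_\c$ by the common amount $\Arg(\mu)\pmod\pi$. Since complex scaling is trivial in $\P(\X)\subset\P^2(\C)$ and a common shift cancels in the differences, both the point $[a_1+a_2\i,b_1+b_2\i,c_1+c_2\i]$ and the angle triple $(\alpha,\beta,\gamma)$ of~\eqref{vector} are similarity invariant. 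As Definition~\ref{similarity} declares two triangles similar exactly when these agree, $\pi_\DD$ identifies precisely the similarity classes, and $\DD=\{[T]:T\in\TT\}$ is by construction in bijection with them; in the language of Proposition~\ref{prop}, sufficiency (each class gives a unique value) is this well-definedness, and necessity (each value gives a unique class) is the injectivity just described. What remains is to verify that this quotient loses only the rotational and scaling freedom, and to name the strata.

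For the faithfulness I would run through the configuration types of Proposition~\ref{prop}. When $\a\b\c\neq\0$ every homogeneous coordinate is nonzero, so $\xi_\a=\Arg(a_1+a_2\i)$, etc., up to the common rotation, and the shape point in $\P(\X)$ already forces $(\alpha,\beta,\gamma)$ by~\eqref{vector}; thus the class is determined by its shape, with non-collinear shapes giving nondegenerate classes and collinear shapes (all coordinates nonzero) giving simple classes. For an ordinary double point exactly one coordinate vanishes, the shape is pinned to one of the three double points $[0,1,-1],[1,0,-1],[1,-1,0]$ of $\P(\X)$, the two equal surviving arguments cancel, and the lone free argument persists as the single real modulus recorded by $(\alpha,\beta,\gamma)=(0,\psi,-\psi)$ up to relabeling; this is exactly the approach-direction of the double-point screen (Subsubsection~\ref{doublepoints}), so these values biject with the double classes. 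Finally the triple and tripled-double configurations contribute no shape of their own, only a screen whose side-vectors are the complex numbers $a_1+a_2\i,b_1+b_2\i,c_1+c_2\i$ summing to $0$ by Definition~\ref{triangles}(a)\eqref{ii}; since the class variable is assembled from these exactly as from $(\a,\b,\c)$, an ordinary triple point reproduces the nondegenerate or simple class of its screen and a tripled-double reproduces the double class of its screen. Hence the five configuration types collapse onto the shapes already produced by the first three, no new class values appear, and the class variable stays injective on non-similar classes.

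To name the strata, I would compute the fibres of $\pi_{\P(\X)}:\DD\to\P(\X)\isom\P^1(\C)$: a single point wherever all three coordinates are nonzero and a circle $\R/\pi$ (the free argument $\psi$) over each of the three double points, exhibiting $\DD$ as the real blowup of $\P^1(\C)$ at those points. On each exceptional circle the value $\psi=0$ is distinguished: there the approach direction is collinear with the surviving edge, the angles collapse to $(0,0,0)$, and the point is simultaneously the limit of simple classes running into the double point along the collinear locus and of double classes running around the circle. These three points are the doubled simple classes; deleting them leaves the open exceptional arcs as the ordinary double classes, the collinear locus minus the three double points as the ordinary simple classes, and the non-collinear locus as the nondegenerate classes, which is the asserted four-fold partition.

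I expect the main obstacle to be the degenerate bookkeeping of the second paragraph: verifying that the free argument at a double point is recorded faithfully and surjectively by $(0,\psi,-\psi)$, so that distinct approach directions stay distinct and every double class is attained, and that the triple and tripled-double screens generate no shape beyond those realized by the first three types. Establishing this is what makes $\pi_\DD$ a genuine quotient by similarity rather than a lossy map, and it is also the point where the new boundary stratum of doubled simple classes is forced to appear, as the locus where the collinear and double strata meet.
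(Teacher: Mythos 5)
Your proposal is correct and follows essentially the same route as the paper: invoke Proposition~\ref{prop}, observe that the class variable is invariant under (and only under) plane similarities, and show that triple and tripled-double points contribute no new classes, leaving the four stated categories. The only cosmetic difference is that you absorb the triple points by noting the class variable reads the screen data in the $\P^5(\R)$-component directly, whereas the paper realizes each triple point as the limit of a family of similar non-triple points by scaling $(\a,\b,\c)$ to $(\0,\0,\0)$; your third paragraph's fiber computation is extra work the paper defers to Lemma~\ref{blowcomp} and Theorem~\ref{main2}, but it is consistent with what is proved there.
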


\begin{proof}
The variable describes all of them by Proposition~\ref{prop},
since similarity is an equivalence relation on triangles,
removing the location, scaling, and rotational aspects.
Triple points are absorbed into non-triple point classes: 
Each is obtainable as the limit of a family
of similar non-triple points by simply letting the nonzero $(\a,\b,\c)$
tend uniformly to $(\0,\0,\0)$ while fixing the other data,
which is admissible by Definition~\ref{triangles}.
This leaves the four stated categories, which are obviously disjoint.
\end{proof}

\subsubsection{\it Double Point Arguments}\label{doublepoints}
Definition~\ref{triangles}(a)\eqref{iv} allows $\xi_\b$
to take any value in $\R/\pi$ at a double point $\b=\0$.
These different values signal different approach directions at the double point,
and arise naturally in continuous families
such as the one depicted in \eqref{family2}.
When
$\b=\0$ and $\a\c\neq\0$, 
$\xi_\b$ is the argument $\pmod\pi$ of the tangent to the circle at $A=C$, 
which is natural because it is
the limit of $\Arg(\b)\pmod\pi$ as $\b$ approaches $\0$. 
\begin{equation}\label{family2}
\begin{minipage}{0.45\textwidth}
\begin{tikzpicture}[scale=.8]
\def\radius{1.3}
\def\radiustwo{1.838478}
\def\aa{1.125833}
\def\bb{.65}
\draw[thick] (0,0) circle (\radius cm);
\coordinate (target) at ({\radius*cos(120)},{\radius*sin(120)}); 
\coordinate (target2) at ({\radius*cos(-30)},{\radius*sin(-30)}); 
\foreach \angle in {-120, -90, ..., -60}{
\coordinate (vertex) at ({\radius*cos(\angle)},{\radius*sin(\angle)});
\coordinate (vertextwo) at 
({\aa+\radiustwo*(\radius*cos(\angle)-\aa)/
sqrt(2*(\radius)^2-2*\aa*\radius*cos(\angle)+2*\bb*\radius*sin(\angle))},
{-\bb+\radiustwo*(\radius*sin(\angle)+\bb)/
sqrt(2*(\radius)^2-2*\aa*\radius*cos(\angle)+2*\bb*\radius*sin(\angle))});
\fill (vertex) circle (1pt);
\draw (vertex) -- (target);
\draw (vertex) -- (target2);
\draw (vertextwo) -- (target2);}
\foreach \angle in {0, 30, ..., 60}{
\coordinate (vertex) at ({\radius*cos(\angle)},{\radius*sin(\angle)});
\coordinate (vertextwo) at 
({\aa+\radiustwo*(\radius*cos(\angle)-\aa)/
sqrt(2*(\radius)^2-2*\aa*\radius*cos(\angle)+2*\bb*\radius*sin(\angle))},
{-\bb+\radiustwo*(\radius*sin(\angle)+\bb)/
sqrt(2*(\radius)^2-2*\aa*\radius*cos(\angle)+2*\bb*\radius*sin(\angle))});
\fill (vertex) circle (1pt);
\fill (\aa,-\bb) circle (1pt);
\draw (\aa,-\bb) circle (2pt);
\draw (vertex) -- (target);
\draw (vertex) -- (target2);
\draw (vertextwo) -- (target2);}
\draw[thick] (target)--(target2);
\draw[thick]
({\aa+\radiustwo*(1/2)},{-\bb+\radiustwo*sqrt(3)/2})--
({\aa-.9*\radiustwo*(1/2)},{-\bb-.9*\radiustwo*sqrt(3)/2});
\draw
(\aa,-\bb)--(3,-\bb);
\draw[->,-latex] (\aa+.4,-\bb) arc (0:60:.4) node[right] {\footnotesize\;$\xi_\b$};
\fill (target) circle (1pt);
\filldraw[white] (target2) circle (2pt);
\draw (target2) circle (2pt);
\fill (target2) circle (1pt);
\node[below right] at (target2) {\footnotesize $C=A$};
\node[above left] at (target) {\footnotesize $B$};
\node at (0,-2.4) {Argument Calculation at a Double Point};
\end{tikzpicture}
\end{minipage}
\end{equation}
By inscribing $BC$ in circles of different radii we obtain different values for $\xi_\b$
at the double point $A=C$, reflecting the limits of different families of inscribed triangles with the 
common side-vector $\a=BC$, ranging over
all values in $\R/\pi$, and comprising all approach directions, see \eqref{appdir}.
\begin{equation}\label{appdir}
\begin{minipage}{0.3\textwidth}
\begin{tikzpicture}[scale=.8]
\foreach \h in {-1,0,1}
{\draw[domain={atan(\h)}:{180-atan(\h)},smooth,variable=\t] plot ({sqrt(1+(\h)^2)*cos(\t)},{-\h+sqrt(1+(\h)^2)*sin(\t)});
}
\draw[->,-latex] (1,0)--({1-(.5*sqrt(1+(-1)^2)*sin(atan(-1))))},{.5*sqrt(1+(-1)^2)*cos(atan(-1))}); 
\draw[->,-latex] (1,0)--({1-((.5*1.414)*sin(atan(0))))},{(.5*1.414)*cos(atan(0))}); 
\draw[->,-latex] (1,0)--({1-(.5*sqrt(1+(1)^2)*sin(atan(1))))},{.5*sqrt(1+(1)^2)*cos(atan(1))}); 
\draw[->,-latex] (1,0)--({1+(.5*1.414)*cos(atan(0))},{(.5*1.414)*sin(atan(0))}); 
\node[below] at (1.1,0) {\footnotesize $C=A$};
\node[left] at (-1.1,0) {\footnotesize $B$};
\draw[thick] (-1,0)--(1,0);
\fill (-1,0) circle (1pt);
\filldraw[white] (1,0) circle (2pt);
\draw (1,0) circle (2pt);
\fill (1,0) circle (1pt);
\node at (0,-.8) {Different Approach Directions};
\end{tikzpicture}
\end{minipage}
\end{equation}
Therefore double points are depicted as in \eqref{types}:
\[
\begin{tikzpicture}[scale=1.2] 
\node (A) at (1,0){};
\node (B) at (3,0) {};
\fill (A) circle (1pt);
\fill (B) circle (1pt);
\draw (B) circle (2pt);
\draw[->,-latex]([yshift=1.2]2.9,0)--([yshift=1.2]1.1,0);
\draw[->,-latex]([yshift=-1.2]1.1,0)--([yshift=-1.2]2.9,0);
\draw[->,-latex] (B)--({3+.5*cos(45)},{.5*sin(45)});
\draw[->,-latex] (B)--({3+.5*cos(135)},{.5*sin(135)});
\draw[->,-latex] (B)--({3+.5*cos(0)},{.5*sin(0)});
\draw[->,-latex] (B)--({3+.5*cos(90)},{.5*sin(90)});
\node[below] at (A) {\footnotesize $B$};
\node[below] at (B) {\footnotesize $C=A$};
\node at (2,-.8) {Double Points};
\end{tikzpicture}\]

Since the interior angle $\alpha$ of every triangle
in the family \eqref{family2}
is constant $\pmod\pi$, and different for circles of different radii,
the same chord $BC$ with fixed data $(B_0;(\a,\0,-\a))$ appears in 
families with constant and distinct interior angles $\alpha$.
Thus these degenerate triangles and their classes are properly separated, again
showing why arguments and interior angles are necessarily parts of
our geometric variables. 

\subsubsection{\it Incorporating Orientation}\label{orientation}
The necessity of including orientation
is discussed in \cite[p.46, 1.1.10]{Beh} and 
in \cite[p.305]{Montgomery}.
Essentially, a space that doesn't account for it fails to distinguish 
certain nonisomorphic families.
Such a space also fails to be comprehensive:
Figure~\ref{family1} shows what clearly should qualify as a continuous family,
inscribed triangles with fixed chord $BC$
and variable vertex $A(\xi)=e^{\xi\i}$,
changing orientation at the degenerate double point $C=A$.
\begin{equation}\label{family1}
\begin{minipage}{0.4\textwidth}
\begin{tikzpicture}[scale=.8]
\def\radius{1.3}
\draw[thick] (0,0) circle (\radius cm);
\filldraw (0,0) circle (1pt);
\coordinate (target) at ({\radius*cos(120)},{\radius*sin(120)}); 
\coordinate (target2) at ({\radius*cos(-30)},{\radius*sin(-30)}); 
\foreach \angle in {-90, -70, ..., 30}{
\coordinate (vertex) at ({\radius*cos(\angle)},{\radius*sin(\angle)});
\fill (vertex) circle (1pt);
\draw (vertex) -- (target);
\draw (vertex) -- (target2);
\draw[thick] (target)--(target2);
}
\fill (target) circle (1.5pt);
\fill (target2) circle (1.5pt);
\node[below right] at (target2) {\footnotesize $C$};
\node[above left] at (target) {\footnotesize $B$};
\node[right] at ({\radius*cos(30)},{\radius*sin(30)}) {\footnotesize $A(\pi/3)$};
\node[below left] at ({\radius*cos(270)},{\radius*sin(270)}) {\footnotesize $A(-\pi/2)$};
\draw[->,-latex] (0,0)--(0,1.7);
\draw[->,-latex] (0,0)--(0,-1.7);
\draw[->,-latex] (0,0)--(1.7,0);
\draw[->,-latex] (0,0)--(-1.7,0);
\node at (0,-2.3) {Orientation Reversal in Families};
\end{tikzpicture}
\end{minipage}
\end{equation}
If orientation is not incorporated, the 
interior angle $\alpha$ is discontinuous at the double point.
For by Euclid the positive classes have some fixed $\alpha_+$,
the negative classes some fixed $\alpha_-$, 
and $|\alpha_+|+|\alpha_-|=\pi$, and the family is discontinuous at the double point when $\alpha_+\neq\pi/2$.
By including orientation and the accompanying sign convention
we compute $\alpha_+-\alpha_-=\pi$, hence $\alpha_+=\alpha_-\pmod\pi$, and the problem disappears:
At the double point, $(\alpha,\beta,\gamma)=(\alpha_+,0,-\alpha_+)=(\alpha_-,0,-\alpha_-)\pmod\pi$,
so the interior angles vary continuously.

\section{\Large The Space of Triangles}\label{st}

In this section we show $\TT$ is a smooth $\R$-variety, by showing
it is the iterated blowup of $\C_0\times\X$ at the origin and at
the strict transform of the subvariety of triple and double points.
This is the key step in the proof that the functor $\scr F_\TT$
of Definition~\ref{varfam} is representable.
The construction we use is mentioned in 
\cite[p.196]{FM94}, and the resulting space is that of \cite{Semple}, by \cite[p.189]{FM94}.

Let $X,Y,Z$ be coordinate functions on $\C^3$, let 
\[H_\a=\V(X)\cap\X,\quad H_\b=\V(Y)\cap\X,\quad H_\c=\V(Z)\cap\X\]
and put $H=H_\a\cup H_\b\cup H_\c$.
Away from the origin, this is the locus of double points.

\subsection{\bf $\TT$ is a Blowup of $\bold{\C_0\times\X}$}
The space $\C_0\times\X$ locates all triple and double points in $\TT$
on the real-codimension-two plane $H$. 
By the analysis in the proof of Proposition~\ref{prop},
the triple points of $\TT$ are distinguished by their approach to the origin
$(\a,\b,\c)=(\0,\0,\0)\in\X$, 
and the double points by their incident normal directions on 
$H$ away from the origin.
Therefore it makes sense that they should be properly sorted
by blowing up $\C_0\times\X$ first at the origin, and then at 
the strict transform of $H$.

\begin{Theorem}\label{main1}
Let $\wt H$ be the strict transform of $H$ in the real blowup of $\C_0\times\X$ at the origin
$\0\in\X$. Then $\Bl_{\wt H}(\Bl_\0(\C_0\times\X))=\TT$,
and this is a smooth $\R$-variety.
\end{Theorem}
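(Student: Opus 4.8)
The plan is to realize both blowups concretely inside the ambient product $\C_0\times\X\times\P^5(\R)\times(\R/\pi)^3$ as closures of graphs, and to match the two exceptional divisors against the $\P^5(\R)$- and $(\R/\pi)^3$-components of the geometric variable. First I would record that $\X=\V(X+Y+Z)$ is a real $4$-dimensional linear subspace of $\R^6$ with coordinates $a_1,a_2,b_1,b_2,c_1,c_2$, cut out by $a_1+b_1+c_1=a_2+b_2+c_2=0$, so that the real projectivization of its tangent space at $\0$ is exactly the copy of $\P^3(\R)=\V(a_1+b_1+c_1)\cap\V(a_2+b_2+c_2)\subset\P^5(\R)$ appearing in Definition~\ref{triangles}(a)\eqref{ii}. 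The real blowup $\Bl_\0(\C_0\times\X)$ is then the closure in $\C_0\times\X\times\P^5(\R)$ of the graph of $(\a,\b,\c)\mapsto[\re(\a),\im(\a),\re(\b),\im(\b),\re(\c),\im(\c)]$ over the complement of the triple-point locus $\C_0\times\{\0\}$; checking this on the standard affine charts of the blowup of $\R^4$ at the origin shows it coincides with the subvariety defined by constraints \eqref{i} and \eqref{ii}, with exceptional divisor $\C_0\times\P^3(\R)$ carrying precisely the triple-point approach data described in the proof of Proposition~\ref{prop}.

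Next I would analyze the center of the second blowup. Each of $H_\a,H_\b,H_\c$ is a complex line through $\0$, hence a smooth real surface, and the three lines meet pairwise only at $\0$, with three distinct tangent directions there. The key local computation is that, because those tangent directions are distinct, the first blowup separates the components: the strict transforms $\wt H_\a,\wt H_\b,\wt H_\c$ meet the exceptional $\P^3(\R)$ in three distinct points and are pairwise disjoint. Hence $\wt H=\wt H_\a\sqcup\wt H_\b\sqcup\wt H_\c$ is a smooth center in $\Bl_\0(\C_0\times\X)$, each component of real codimension $2$ with normal bundle a complex line.

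Finally, blowing up along this smooth, codimension-two center yields a smooth $\R$-variety: the blowup of a smooth $\R$-variety along a smooth center is smooth, and $\C_0\times\X\cong\R^6$ together with the smooth center $\C_0\times\{\0\}$ already makes $\Bl_\0(\C_0\times\X)$ smooth. To identify the result with $\TT$, I would invoke the isomorphism $\rho\colon\P^1(\R)\isom\R/\pi$ of Definition~\ref{rho}: the real projectivization of the normal line to $\wt H_\a$ is $\P^1(\R)\cong\R/\pi$, and this exceptional fiber is exactly the free argument $\xi_\a$ permitted by constraint \eqref{iv}, while on the complement $\xi_\a=\Arg(a_1+a_2\i)\pmod\pi$ as in \eqref{iii}. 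Exhibiting $\Bl_{\wt H}(\Bl_\0(\C_0\times\X))$ as the closure of the graph of $p\mapsto(\xi_\a,\xi_\b,\xi_\c)$ over the locus where all three coordinate pairs are nonzero then identifies it, chart by chart, with the subvariety $\TT$ cut out by \eqref{i}--\eqref{iv}.

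I expect the main obstacle to be the second step: verifying that the first blowup fully separates $H_\a,H_\b,H_\c$ so that $\wt H$ is genuinely smooth with disjoint components, and tracking this compatibly with the argument data at the \emph{tripled double} points where $\wt H$ meets the first exceptional divisor. There the free argument $\xi_\a$ introduced by the second blowup must be matched against the vanishing of the projective pair $(a_1,a_2)$ rather than against $\a=\0$ directly, and confirming that constraints \eqref{iii}--\eqref{iv}, phrased in terms of the $\P^5(\R)$-coordinates, agree with the exceptional-fiber description at these boundary strata is the delicate bookkeeping step.
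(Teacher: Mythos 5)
Your plan is correct and follows essentially the same route as the paper: realize both blowups as graph closures inside $\C_0\times\X\times\P^5(\R)\times(\R/\pi)^3$, match $E_1$ to the $\P^5(\R)$-data at triple points and the exceptional fiber over the disjoint smooth center $\wt H=\wt H_\a\sqcup\wt H_\b\sqcup\wt H_\c$ to the free arguments via $\rho:\P^1(\R)\isom\R/\pi$, and conclude smoothness from blowing up smooth centers. The only substantive work you defer --- the ``delicate bookkeeping'' identifying the second exceptional fiber with $\TT_{\rm dbl}\sqcup\TT_{\rm tpl-dbl}$ --- is exactly what the paper carries out, by exhibiting explicit limits $p_\ep\to p_0$ for one inclusion and showing the conditions of Definition~\ref{triangles}(a) are closed for the other.
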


\begin{proof}
Let
\[\pi_1:\Bl_\0(\C_0\times \X)\lr\C_0\times\X\] 
denote the real blowup with exceptional fiber $E_1=\pi_1^{-1}(\0)$.
The origin is the common zero set $\V(X_1,X_2,Y_1,Y_2,Z_1,Z_2)$ of the real coordinate
functions on $\X$.
By \cite[Example 7.18]{Ha92} or \cite[IV-22]{EH}, $\Bl_\0(\C_0\times\X)$ is the graph of the rational function
\[[X_1,X_2,Y_1,Y_2,Z_1,Z_2]:\C_0\times\X\lr\P^5(\R)\]
which is the closure in $\C_0\times\X\times\P^5(\R)$ of the set
\[\Bl_\0(\C_0\times \X)-E_1
=\left\{\left(B_0;(\a,\b,\c);[a_1,a_2,b_1,b_2,c_1,c_2]\right)\right\}\subset\C_0\times\X\times\P^5(\R)\]
where $\a=a_1+a_2\i$, $\b=b_1+b_2\i$, and $\c=c_1+c_2\i$.
Since $a_1+b_1+c_1=a_2+b_2+c_2=0$ on $\Bl_\0(\C_0\times \X)-E_1$ 
and $E_1$ is in its closure, the $\P^5(\R)$ component 
of $E_1$ also satisfies this (closed) condition. Conversely every
$(B_0;(\0,\0,\0);[a_1,a_2,b_1,b_2,c_1,c_2])$ satisfying $a_1+b_1+c_1=a_2+b_2+c_2=0$
is in $E_1$ as the limit of 
$\left(B_0;(\ep\a,\ep\b,\ep\c);[a_1,a_2,b_1,b_2,c_1,c_2]\right)$ as $\ep\to 0$.
Therefore
\[E_1=\left\{\left(B_0;(\0,\0,\0);
[a_1,a_2,b_1,b_2,c_1,c_2]\right)\,:\,
a_1+b_1+c_1=a_2+b_2+c_2=0\right\}\]
and 
$\Bl_\0(\C_0\times\X)=\left\{\left(B_0;(\a,\b,\c);[a_1,a_2,b_1,b_2,c_1,c_2]\right)\right\}$
such that $\a=a_1+a_2\i$, $\b=b_1+b_2\i$, and $\c=c_1+c_2\i$ whenever $(\a,\b,\c)\neq(\0,\0,\0)$; 
and $a_1+b_1+c_1=a_2+b_2+c_2=0$.

The strict transform of $H$, which is the closure in $\Bl_\0(\C_0\times\X)$ of $\pi_1^{-1}(H-\{\0\})$, is 
\[\wt H=\left\{\left(B_0;(\a,\b,\c);[a_1,a_2,b_1,b_2,c_1,c_2]\right)\,:\,
(a_1+a_2\i)(b_1+b_2\i)(c_1+c_2\i)=\0\right\}\]
Thus $\wt H$ contains the first three components of double points or
tripled double points, but not of ordinary triple points.
Since $a_1+b_1+c_1=a_2+b_2+c_2=0$, exactly one
alphabetical pair is zero,
so $\wt H=\wt H_\a\sqcup\wt H_\b\sqcup\wt H_\c$ is a disjoint
union.

Let
\[\pi_2:\Bl_{\wt H}(\Bl_\0(\C_0\times\X))\lr\Bl_\0(\C_0\times\X)\]
denote the real blowup at $\wt H$, with exceptional fiber $E_2=\pi_2^{-1}(\wt H)$.
Let $X_1,X_2,Y_1,Y_2,Z_1,Z_2$ now be the coordinate functions on the 
$\P^5(\R)$-component. Then $\wt H=\V(\{X_i Y_j Z_k:i,j,k\in\{1,2\}\})$,
and the evaluation of these eight monomials on $[a_1,a_2,b_1,b_2,c_1,c_2]$ is in the image
of the product
$\P^1(\R)^3$ in $\P^7(\R)$ under the Segre embedding \cite[Example 2.11]{Ha92}.
We place it in $(\R/\pi)^3$ using $\rho$ of Definition~\ref{rho}.
Thus $\Bl_{\wt H}(\Bl_\0(\C_0\times\X))$ is the closure in 
$\C_0\times\X\times\P^5(\R)\times(\R/\pi)^3$ of the set
\[\begin{tikzpicture}
\node at (0,0) {$\Bl_{\wt H}(\Bl_\0(\C_0\times\X))- E_2$};
\node at (0,-.5) {$\|$};
\node at (0,-.9) {$\{(B_0;(\a,\b,\c);[a_1,a_2,b_1,b_2,c_1,c_2];(\xi_{(a_1+a_2\i)},\xi_{(b_1+b_2\i)},\xi_{(c_1+c_2\i)})
):(a_1+a_2\i)(b_1+b_2\i)(c_1+c_2\i)\neq\0\}$};
\end{tikzpicture}\]
This set includes all triangles that are not double or tripled-double points.
As both $\TT$ and $\Bl_{\wt H}(\Bl_\0(\C_0\times\X))$ 
are subsets of $\C_0\times\X\times\P^5(\R)\times(\R/\pi)^3$, we find
\[\TT-(\TT_{\rm dbl}\sqcup\TT_{\rm tpl-dbl})
=\Bl_{\wt H}(\Bl_\0(\C_0\times\X))-E_2\]
and it remains to show $\TT_{\rm dbl}\sqcup\TT_{\rm tpl-dbl}=E_2$.
We'll prove the two sets are mutually inclusive.

Put
\begin{align*}
&\Omega=\C_0\times\X\times\P^5(\R)\times(\R/\pi)^3 & & {\rm B}=\Bl_{\wt H}(\Bl_\0(\C_0\times\X))\\
&\TT^\circ=\TT-\left(\TT_{\rm dbl}\sqcup\TT_{\rm tpl-dbl}\right) & & {\rm B}^\circ=\Bl_{\wt H}(\Bl_\0(\C_0\times\X))-E_2
\end{align*}
Suppose $p_0\in\TT_{\rm dbl}$ is an arbitrary point with $\b=\0$.
By Definition~\ref{triangles}, $p_0$
has form
\[p_0=\left(B_0;(\a,\0,-\a);[a_1,a_2,0,0,-a_1,-a_2];(\xi_\a,\xi,\xi_\a)\right)\]
for some $B_0,\a=a_1+a_2\i\neq\0$, $\xi_\a=\rho([a_1,a_2])$, and some $\xi\in\R/\pi$.
A similar description holds if $\a=\0$ or $\c=\0$.
Let $\b\neq\0$ be any vector with argument $\xi_\b=\xi$, and
for real $\ep>0$ let 
\[p_\ep=(B_0;(\a,\ep\b,-(\a+\ep\b));
[a_1,a_2,\ep b_1,\ep b_2,-(a_1+\ep b_1),-(a_2+\ep b_2)];
(\xi_\a,\xi_\b,\xi_{-(\a+\ep\b)}))\] 
which is in $\TT^\circ$ by
Definition~\ref{triangles}. 
Since $\a\neq\0$ we may assume $\a+\ep\b\neq\0$.
Then $\ds{p_0=\lim_{\ep\to 0} p_\ep}$, and since $\TT^\circ={\rm B}^\circ$, 
this shows $p_0$ is in the closure of ${\rm B}^\circ$ in
$\Omega$, hence
in ${\rm B}$, hence in $E_2$. 
A similar argument applies when $\a=\0$ or $\c=\0$,
and we conclude $\TT_{\rm dbl}\subset E_2$.

Suppose $p_0\in\TT_{\rm tpl-dbl}$ is arbitrary,
given by 
\[p_0=(B_0;(\0,\0,\0);[a_1,a_2,b_1,b_2,c_1,c_2];(\xi_\a,\xi_\b,\xi_\c))\]
Let $\a=a_1+a_2\i,\b=b_1+b_2\i,\c=c_1+c_2\i$, and put
\[p_\ep=(B_0;(\ep\a,\ep\b,\ep\c);[a_1,a_2,b_1,b_2,c_1,c_2];(\xi_\a,\xi_\b,\xi_\c))\]
Each component of $p_\ep$
satisfies Definition~\ref{triangles}(a), 
hence $p_\ep\in\TT^\circ$. Since
$p_0=\ds{\lim_{\ep\to 0} p_\ep}$, as before we see $p_0$ is in $E_2$,
and conclude $\TT_{\rm tpl-dbl}\subset E_2$. Therefore $\TT_{\rm dbl}\sqcup\TT_{\rm tpl-dbl}\subset E_2$.

We'll prove $\TT_{\rm dbl}\sqcup\TT_{\rm tpl-dbl}\supset E_2$ by showing
the conditions of $\TT$ in Definition~\ref{triangles}(a) are closed.
Since $E_2$ is in the closure of $\TT^\circ$ in $\Omega$, this will show $E_2\subset\TT$, and 
the result follows.

The real blowup $\Bl_\0(\X)$ is the closure in $\X\times\P^5(\R)$ of the set
\[\{((\a,\b,\c);[a_1,a_2,b_1,b_2,c_1,c_2])\,:\,(\a,\b,\c)\neq(\0,\0,\0)\}\]
where $\a=a_1+a_2\i,\b=b_1+b_2\i,\c=c_1+c_2\i$, and $\a+\b+\c=\0$.
The closed subset $\C_0\times\Bl_\0(\X)\times(\R/\pi)^3\subset\Omega$ contains $\TT^\circ=\rm B^\circ$ 
by Definition~\ref{triangles}(a)(i).
Therefore it contains the closure $\rm B$ of ${\rm B}^\circ$ in $\Omega$, hence 
$E_2$ satisfies $\a+\b+\c=\0$ along with Definition~\ref{triangles}(a)(i,ii).

Let $X_1,X_2$ be the first two coordinate functions on $\P^5(\R)$.
The (real) blowup $\Bl_{\V(X_1,X_2)}(\P^5(\R))$ is the closure in $\P^5(\R)\times\R/\pi$ of the set
\[\{([a_1,a_2,b_1,b_2,c_1,c_2];\xi_\a)\,:\,(a_1,a_2)\neq (0,0)\}\]
where $\xi_\a=\rho([a_1,a_2])$.
The subset $\C_0\times\X\times\Bl_{\V(X_1,X_2)}(\P^5(\R))\times(\R/\pi)^2$
is closed in $\Omega$ and contains $\TT^\circ=\rm B^\circ$ 
by Definition~\ref{rho} and Definition~\ref{triangles}(a)(iii), and similarly for $(b_1,b_2)$ and $(c_1,c_2)$.
Therefore the intersection of these three closed sets, which is closed, contains
$\rm B$. Thus whenever an alphabetical pair in the $\P^5(\R)$-component in $\rm B$ is nonzero,
the corresponding argument satisfies Definition~\ref{triangles}(a)(iii).
Therefore so does $E_2$.

The requirements of Definition~\ref{triangles}(a)(i-iii) suffice to show an element of 
$\Omega$ belongs to $\TT$ by (a)(iv). Since $E_2$ satisfies them all, we conclude $E_2\subset\TT$,
hence $E_2\subset\TT_{\rm dbl}\sqcup\TT_{\rm tpl-dbl}$.
Therefore we have equality $\TT_{\rm dbl}\sqcup\TT_{\rm tpl-dbl}= E_2$, as desired, and can
conclude $\TT=\Bl_{\wt H}(\Bl_\0(\C_0\times\X))$.

The first blowup $\Bl_\0(\C_0\times\X)$ is smooth by \cite[Theorem 22.3.10]{Vakil} or \cite[Theorem 8.1.19]{Liu},
since $\C_0\times\X$ is smooth. Since $\wt H$ is the disjoint union of the smooth linear spaces
$\wt H_\a,\wt H_\b,\wt H_\c$, it too is smooth, therefore
the second blowup $\Bl_{\wt H}(\Bl_\0(\C_0\times\X))$
is smooth by \cite[Theorem 22.3.10]{Vakil} or \cite[Theorem 8.1.19]{Liu}.
We conclude $\TT$ is smooth.

\end{proof}

\begin{Remark}
$\Bl_{\wt H}(\Bl_\0(\C_0\times\X))$ is isomorphic to the Fulton-MacPherson space 
by \cite[p.196]{FM94}, and to the Schubert-Semple space
in \cite{Semple} by \cite[p.189]{FM94}. 
The sequence of blowups we use ($\0$ and then $\wt H$) 
becomes more difficult for configuration spaces with $n\geq 4$
points, and for this reason the blowup protocol in \cite{FM94} is slightly different from
what we use above.
\end{Remark}

\section{\Large The Space of Classes}\label{sc}

The divisors $H_\a,H_\b,H_\c$ and $H$ on $\X$ define on $\P(\X)$ three points 
\[[H_\a]=[0,1,-1],\quad[H_\b]=[1,0,-1],\quad[H_\c]=[1,-1,0]\]
and we put $[H]=[H_\a]\sqcup[H_\b]\sqcup[H_\c]$, the three double points of $\P(\X)$.

We will show that $\DD$ is a smooth $\R$-variety by showing it is 
naturally isomorphic to the blowup
of $\P(\X)\isom\P^1(\C)$ at $[H]$.
This is the key step in the proof that the functor $\scr F_\DD$ of 
Definition~\ref{varfam} is representable.

\subsection{\bf $\DD$ is a Blowup of $\P(\X)$}\label{interp}
The real oriented blowup $\Bl_{[H]}(\P(\X))$ of $\P(\X)\isom\P^1(\C)$ at the three points $[H]$ is 
not computed directly as a set in the same way
as was $\Bl_{\wt H}(\Bl_\0(\C_0\times\X))$, essentially because $\P(\X)$ does not have
a similarly straightforward description over $\R$.
We will compute it on affine charts, and consolidate the result using the following
notation.

\begin{Definition}\label{equivrltn}\rm
For nonzero $\bs z=x+y\i\in\C$, let $[\bs z]=[x,y]\in\P^1(\R)$.
Let $\P(\P^1(\R)^3)$ denote the set of triples $[[\a],[\b],[\c]]$ under the relation
\[[[\a],[\b],[\c]]=[[\bs\lambda\a],[\bs\lambda\b],[\bs\lambda\c]]\] for $\bs\lambda\in\C-\{\0\}$.
Let $\P((\R/\pi)^3)$ denote the set of triples $[\xi_\a,\xi_\b,\xi_\c]$ of (possibly all zero)
real numbers $\pmod\pi$ under the {\it additive} relation
\[[\xi_\a,\xi_\b,\xi_\c]=[\xi_\a+\xi,\xi_\b+\xi,\xi_\c+\xi]\] for $\xi\in\R/\pi$.
Let 
\begin{align*}
\rho:\P(\P^1(\R)^3)&\lr\P((\R/\pi)^3)\\
[[\a],[\b],[\c]]&\lm [\xi_\a,\xi_\b,\xi_\c]
\end{align*}
be the resulting isomorphism,
where $\xi_\a=\rho([\a])$, $\xi_\b=\rho([\b])$, and $\xi_\c=\rho([\c])$.
\end{Definition}


\begin{Lemma}\label{blowcomp}
The real (oriented) blowup of $\P(\X)$ at $[H]$ is given as a set by
\begin{equation}\label{projblowup}
\Bl_{[H]}(\P(\X))=\left\{\left([\a,\b,\c];[\xi_\a,\xi_\b,\xi_\c]\right)\right\}
\subset\P(\X)\times\P((\R/\pi)^3)\end{equation}
where $\xi_\a,\xi_\b,\xi_\c$ are determined by $\a,\b,\c$ when the latter
are nonzero, and are unrestricted otherwise.
\end{Lemma}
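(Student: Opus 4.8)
The plan is to use that the real (oriented) blowup is a local construction and to mirror the graph-closure computation of Theorem~\ref{main1}. Over the open complement $\P(\X)-[H]$ the projection $\pi_{\P(\X)}$ is an isomorphism, so I first observe that at any such point $[\a,\b,\c]$ all three coordinates are nonzero; hence $\xi_\a=\rho([\a])$, $\xi_\b=\rho([\b])$, $\xi_\c=\rho([\c])$ are all determined, and rescaling the representative by $\bs\lambda\in\C-\{\0\}$ shifts each argument by $\Arg(\bs\lambda)\pmod\pi$, which is precisely the additive relation defining $\P((\R/\pi)^3)$. Thus $\psi\colon[\a,\b,\c]\mapsto\big([\a,\b,\c];[\xi_\a,\xi_\b,\xi_\c]\big)$ is a well-defined section of $\pi_{\P(\X)}$ over $\P(\X)-[H]$ whose graph is the non-exceptional part of the set \eqref{projblowup}. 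As in Theorem~\ref{main1}, the real blowup is the closure of this graph in $\P(\X)\times\P((\R/\pi)^3)$, so the entire content of the lemma is to compute that closure over each of the three centers $[H_\a],[H_\b],[H_\c]$.

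Second, I would carry out the local computation at one center, the other two being identical by the symmetry among the coordinates. Take $[H_\b]=[1,0,-1]$ and the affine chart $\a=1$, in which points are $[1,\b,-1-\b]$ with local complex coordinate $\b$ vanishing exactly at $[H_\b]$; this identifies a neighborhood of $[H_\b]$ with a neighborhood of $0\in\C$. Mirroring the origin blowup of Theorem~\ref{main1}, whose exceptional directions were recorded projectively in $\P^5(\R)$, the real blowup of $\C$ at $0$ attaches the real projectivized tangent direction $\P^1(\R)\isom\R/\pi$. On this chart $\xi_\a=\Arg(1)=0$ is constant, and as $\b\to 0$ one has $\c=-1-\b\to-1$, so $\xi_\c=\Arg(-1-\b)\to\Arg(-1)=0\pmod\pi$, while $\xi_\b=\Arg(\b)$ tends to an arbitrary value in $\R/\pi$ determined by the direction of approach. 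Hence the closure adds over $[H_\b]$ exactly the circle $\{([1,0,-1];[0,\xi_\b,0]):\xi_\b\in\R/\pi\}$: the argument $\xi_\b$ attached to the vanishing coordinate $\b$ is unrestricted, while $\xi_\a,\xi_\c$ keep their determined values. This is precisely the fiber asserted by \eqref{projblowup}, and it is a copy of $\R/\pi$ as required.

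Finally I would consolidate. The three local computations agree with $\psi$ off the centers and each attaches one free argument over its center, so gluing them along the transition maps of $\P(\X)\isom\P^1(\C)$ produces the global incidence set \eqref{projblowup}, the map $\rho$ and the additive relation of Definition~\ref{equivrltn} absorbing the differing chart normalizations (passing from the chart $\a=1$ near $[H_\b],[H_\c]$ to a chart $\b=1$ near $[H_\a]$ amounts to subtracting a common $\xi$). The main obstacle is exactly this consolidation: because $\P(\X)$ has no global real-coordinate presentation---unlike $\C_0\times\X$ in Theorem~\ref{main1}---one cannot read off the blowup from a single graph, and the delicate point is to verify that the three charted real blowups patch into the single clean set cut out by $\rho$, and that the projective (additive-mod-$\R/\pi$) identification on the second factor is forced by, and equivalent to, the complex scaling freedom in $\P(\X)$.
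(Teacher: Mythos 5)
Your proposal is correct and follows essentially the same route as the paper's proof: both compute the blowup as the closure of the graph of the argument map over affine charts, identify the exceptional fiber over each center as the circle of free arguments attached to the vanishing coordinate, and observe that the additive equivalence relation on $(\R/\pi)^3$ of Definition~\ref{equivrltn} is exactly what absorbs the chart-to-chart (equivalently, complex-rescaling) ambiguity. The only cosmetic difference is that the paper works in a single chart $\P(\X)_L$ for a linear form $L$ vanishing at none of the three centers, so all three exceptional fibers are computed at once, whereas you use the standard charts $\a=1$, $\b=1$ and invoke the coordinate symmetry.
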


\begin{proof}
Let $L\in\C[X,Y,Z]$ be a linear form that is not a multiple of $X+Y+Z$
and does not vanish on $[H]$, and let
$\A=\V(L-1)\cap\X$ be the (complex) line.
The variety $\V(L)\cap\X$ is a linear subspace; let $p_L\in\P(\X)$ be the corresponding
point. We have a natural identification
\begin{align*}
\P(\X)-\{p_L\}&\lr \A\isom\A^1(\C)\\
p_\ell&\lm \ell\cap \A
\end{align*}
for the points $p_\ell$ of (one-dimensional) linear subspaces $\ell\subset\X$.
The sequence
\[1\lr(L-1)\lr\C[\X]\lr\C[\X][\tfrac 1L]_0\lr 1\]
determines an isomorphism 
\[\C[\A]=\sfrac{\C[\X]}{(L|_\X-1)}\isom\C[\X][\tfrac 1L]_0=\C[\tfrac XL,\tfrac YL,\tfrac ZL]\]
On the left we have the affine coordinate ring $\C[\A]=\C[X|_\A,Y|_\A,Z|_\A]$, and on the right
the rational functions $\tfrac XL,\tfrac YL,\frac ZL:\P(\X)\dashlr\C$, which are regular on $\P(\X)_L$
since $L$ doesn't vanish on $[H]$,
and satisfy $\frac XL+\frac YL+\frac ZL=0$.
The ideals for $[H_\a],[H_\b],[H_\c]$ are $\V(\tfrac XL),\V(\tfrac YL),\V(\tfrac ZL)$. 

We want to separate the real approach directions to the three points $[H]$.
Since $\frac XL,\frac YL,\frac ZL$ map $\P(\X)_L$ into $\C$, the nonzero values
define elements of $\P^1(\R)$.
Then $\Bl_{[H]}(\P(X)_L)$ is the graph of
\begin{align*}
\P(\X)_L&\dashlr\P^1(\R)^3\;\hookrightarrow\P^7(\R)\\
p&\lm\left(\left[\tfrac XL(p)\right],\left[\tfrac YL(p)\right],\left[\tfrac ZL(p)\right]\right)
\end{align*}
where the inclusion, as in the proof of Theorem~\ref{main1}, 
is the Segre map $\Sigma:\P^1(\R)^3\lr\P^7(\R)$.
Applying $\rho$ yields
\begin{align*}
\P(\X)_L&\dashlr (\R/\pi)^3\\
[\a,\b,\c]&\lm(\xi_\a-\xi_L,\xi_\b-\xi_L,\xi_\c-\xi_L)
\end{align*}

Let $[E]_L$ be the fiber of
$\Bl_{[H]}(\P(\X)_L)$ over $[H]$.
The computation of $[E]_L$ proceeds as in the proof of Theorem~\ref{main1}.
For example, putting $[H_{\b\c}]=[H_\b]+[H_\c]$, we can show 
$\Bl_{[H_{\b\c}]}(\P^1(\C)_L)\times(\R/\pi)$ is closed in $\P(\X)_L\times(\R/\pi)^3$, 
contains the points of $\Bl_{[H]}(\P^1(\C)_L)-[E]_L$, and consequently
over $\a=\0$,
\[[E_\a]_L=\{([0,1,-1];(\xi,-\xi_L,-\xi_L)):\xi\in\R/\pi\}\]
Similar descriptions hold for $[E_\b]_L,[E_\c]_L$.

If we choose another linear form $L'$
not vanishing at $[H]$, we adjust each entry of the $(\R/\pi)^3$-component by $\xi_{L'}-\xi_L$.
Thus the atlas of affine charts induces the equivalence relation of Definition~\ref{equivrltn}
on $(\R/\pi)^3$, and we have
\[\Bl_{[H]}(\P(\X))=\{([\a,\b,\c];[\xi_\a,\xi_\b,\xi_\c])\}\]
with exceptional fiber
\begin{equation}\label{onfiber}
[E]=[E_\a]\sqcup[E_\b]\sqcup[E_\c]=\{([H_\a];[\xi,0,0])\}\sqcup\{([H_\b];[0,\xi,0])\}
\sqcup\{([H_\c];[0,0,\xi])\}
\end{equation}
with each $\xi$ unrestricted.

It remains to point out that whenever $(\xi_\a,\xi_\b,\xi_\c)$ is determined by $(\a,\b,\c)$
as per Definition~\ref{triangles}, the element $p=([\a,\b,\c];[\xi_\a,\xi_\b,\xi_\c])\in\P(\X)\times\P((\R/\pi)^3)$
uniquely determines an element $p_L=([\a,\b,\c];(\xi_\a-\xi_L,\xi_\b-\xi_L,\xi_\c-\xi_L)\in\Bl_{[H]}(\P(\X)_L)$ 
whenever $[\a,\b,\c]\in\P(\X)_L$, so that $p\in\Bl_{[H]}(\P(\X))$.
Therefore $\Bl_{[H]}(\P(\X))$ is indeed exactly the set on
the right side of \eqref{projblowup}.
\end{proof}

\begin{Theorem}\label{main2}
The map $\phi:\DD \to\Bl_{[H]}(\P(\X))$
defined by
\[\phi\left([\a,\b,\c];(\alpha,\beta,\gamma)\right)
=\left([\a,\b,\c];[0,-\gamma,\beta]\right)\;\in\P(\X)\times\P((\R/\pi)^3)\]
is a bijection.
In particular $\DD$ is a smooth $\R$-variety isomorphic to Dyck's surface $\#_3\P^2(\R)$, and we have a 
commutative diagram
\[\begin{tikzpicture}
\coordinate (A1) at (-1.6,1);
\coordinate (A2) at (.5,1);
\coordinate (B) at (-1.6,0);
\coordinate (C) at (.5,0);
\draw[->, -latex] (A1)--(B);
\draw[->, -latex] (A2)--(C);
\draw[->, -latex] (-1.3,-.25)--(-.5,-.25);
\node at (-.9,-.4) {\tiny $\phi$};
\node[above ] at (-.05,1) {$\TT\;=\;\Bl_{\wt H}(\Bl_\0(\C_0\times\X))$};
\node[below] at (B) {$\DD$};
\node[left] at (-1.6,.5) {\tiny $\pi_\DD$};
\node[right] at (.5,.5) {\tiny $\tau$};
\node[below] at (C) {$\Bl_{[H]}(\P(\X))$};
\end{tikzpicture}\]
where $\tau(B_0;(\a,\b,\c);(a_1,a_2,b_1,b_2,c_1,c_2);(\xi_\a,\xi_\b,\xi_\c))
=([\a,\b,\c];[\xi_\a,\xi_\b,\xi_\c])$.
\end{Theorem}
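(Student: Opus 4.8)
The heart of the matter is that the vector-product relation \eqref{vector}, which reads $\langle\alpha,\beta,\gamma\rangle=\langle\xi_\b-\xi_\c,\,\xi_\c-\xi_\a,\,\xi_\a-\xi_\b\rangle\pmod\pi$, is exactly the passage from the argument triple to the interior-angle triple. Packaged invariantly, I would introduce the map $\Psi\colon\P((\R/\pi)^3)\to\T$ sending $[\xi_\a,\xi_\b,\xi_\c]\mapsto(\xi_\b-\xi_\c,\xi_\c-\xi_\a,\xi_\a-\xi_\b)$, and first check it is an isomorphism of $\R$-varieties. It is well defined because the common shift $(\xi,\xi,\xi)$ generating the additive relation of Definition~\ref{equivrltn} is killed by taking differences, and its image lands in $\T=\V(P+Q+R)$ because the three differences sum to $0$. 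It is an isomorphism of tori because it is linear on representatives with kernel exactly the diagonal, and it is onto since $(\alpha,\beta,\gamma)\in\T$ is the image of $[0,-\gamma,\beta]$ (using $\alpha+\beta+\gamma=0$). Consequently $\Psi^{-1}(\alpha,\beta,\gamma)=[0,-\gamma,\beta]$, so $\phi=(\id_{\P(\X)}\times\Psi^{-1})|_\DD$ is the restriction of an isomorphism of the ambient spaces $\P(\X)\times\T\isom\P(\X)\times\P((\R/\pi)^3)$.

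It then remains to show this ambient isomorphism carries $\DD$ exactly onto the set $\Bl_{[H]}(\P(\X))$ of Lemma~\ref{blowcomp}, i.e. that $\phi$ is a bijection onto its stated target, which I would verify stratum by stratum. Off the double locus, where $\a\b\c\neq\0$ and $[\a,\b,\c]\notin[H]$, all three arguments are determined by the side-vectors, and \eqref{vector} gives $[0,-\gamma,\beta]=[\xi_\a,\xi_\b,\xi_\c]$, the unique fiber point permitted by Lemma~\ref{blowcomp}; so $\phi$ is a bijection there. Over a double point, say $[H_\b]=[1,0,-1]$ with $\b=\0$, the relation \eqref{vector} and Definition~\ref{similarity} force $\beta=0$ and $\gamma=-\alpha$, so the fiber of $\DD$ is the circle $\{([H_\b];(\alpha,0,-\alpha)):\alpha\in\R/\pi\}$ carried by the free argument $\xi_\b$; under $\phi$ it maps onto $\{([H_\b];[0,\alpha,0])\}$, which is precisely the exceptional circle $[E_\b]$ of \eqref{onfiber}, and likewise at $[H_\a],[H_\c]$. (No separate triple-point stratum intervenes, since by Corollary~\ref{propcor} triple classes are absorbed into the four listed categories.) I expect this degenerate-fiber bookkeeping --- matching the single free angle to the single free argument while respecting both the projective relation on $\P(\X)$ and the additive relation on $\P((\R/\pi)^3)$ --- to be the only delicate point; the generic stratum is a direct reading of \eqref{vector}.

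With the bijection in hand the structural claims follow. Because $\id_{\P(\X)}\times\Psi$ is an isomorphism of smooth ambient spaces carrying $\Bl_{[H]}(\P(\X))$ onto $\DD$, and the target is a smooth $\R$-variety (the blowup of the smooth $\P(\X)\isom\P^1(\C)$ at the three distinct points $[H]$, smooth as in Theorem~\ref{main1}), $\DD$ is a smooth $\R$-variety and $\phi$ is an isomorphism. Topologically $\P(\X)\isom\P^1(\C)$ is the $2$-sphere, and by Lemma~\ref{blowcomp} each real point-blowup replaces a disk neighborhood by the tautological bundle over the exceptional $\P^1(\R)\isom\R/\pi$, a M\"obius band; hence each such blowup forms the connected sum with $\P^2(\R)$, and blowing up the three points of $[H]$ yields $\#_3\P^2(\R)$, Dyck's surface. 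Finally the commuting triangle is immediate: for $T\in\TT$ with argument data $(\xi_\a,\xi_\b,\xi_\c)$ one has $\tau(T)=([\a,\b,\c];[\xi_\a,\xi_\b,\xi_\c])$ and $\phi(\pi_\DD(T))=([\a,\b,\c];[0,-\gamma,\beta])$, and these coincide by the normalization $[\xi_\a,\xi_\b,\xi_\c]=[0,\xi_\b-\xi_\a,\xi_\c-\xi_\a]=[0,-\gamma,\beta]$ already used above.
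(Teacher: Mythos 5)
Your proof is correct and follows essentially the same route as the paper's: your $\Psi$ is exactly the second component of the paper's explicit inverse $\psi([\a,\b,\c];[\xi_\a,\xi_\b,\xi_\c])=([\a,\b,\c];(\xi_\b-\xi_\c,\xi_\c-\xi_\a,\xi_\a-\xi_\b))$, and your stratum-by-stratum check of the fibers over $[H]$ matches the paper's verification that $\phi$ carries the double-point circles onto the exceptional circles $[E]$. The only difference is packaging: the paper checks $\psi\circ\phi=\id$ and $\phi\circ\psi=\id$ directly rather than presenting $\phi$ as the restriction of the ambient isomorphism $\id_{\P(\X)}\times\Psi^{-1}$.
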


\begin{proof}
The image of $\phi$ is indeed in $\Bl_{[H]}(\P(\X))$:
If $p=[\a,\b,\c]\not\in[H]$ then 
\[\phi\left(p;(\alpha,\beta,\gamma)\right)
=(p;[0,-\gamma,\beta])=(p;[\xi_\a,\xi_\a-\gamma,\xi_\a+\beta])
=(p;[\xi_\a,\xi_\b,\xi_\c])\] 
which is on $\Bl_{[H]}(\P(\X))-[E]$ by Lemma~\ref{blowcomp}.
Over $[H]$ we find \[\phi([H_\b];(\alpha,0,-\alpha))=([H_\b];[0,-\gamma,0])\]
and this is on $[E]$ by \eqref{onfiber}.
Similarly if $\a=\0$ or $\c=\0$.
Therefore $\phi$ maps into $\Bl_{[H]}(\P(\X))$. 

The equality at the top of the diagram is Theorem~\ref{main1}.
The left vertical arrow $\pi_\DD$ is defined in Definition~\ref{similarity}, and takes
$(B_0;(\a,\b,\c);[a_1,a_2,b_1,b_2,c_1,c_2];(\xi_\a,\xi_\b,\xi_\c))$ to
$([a_1+a_2\i,b_1+b_2\i,c_1+c_2\i];(\alpha,\beta,\gamma))$, 
where $(\alpha,\beta,\gamma)=(\xi_\a,\xi_\b,\xi_\c)\times(1,1,1)$.
Since the right vertical arrow $\tau$ takes it 
to $([a_1+a_2\i,b_1+b_2\i,c_1+c_2\i];[\xi_\a,\xi_\b,\xi_\c)])$,
the diagram commutes.

To show $\phi$ is bijective we show it has inverse 
$\psi:\Bl_{[H]}(\P(\X))\to\DD$
defined by 
\[\psi\left([\a,\b,\c];[\xi_\a,\xi_\b,\xi_\c]\right)
=\left([\a,\b,\c];(\xi_\b-\xi_\c,\xi_\c-\xi_\a,\xi_\a-\xi_\b)\right)\]
where the arguments are unrestricted if their corresponding side-vectors are zero.
This is well-defined since the arguments on the left are well-defined modulo
addition of a constant on the diagonal.
Since $[0,-\gamma,\beta]\times(1,1,1)=(-(\beta+\gamma),\beta,\gamma)
=(\alpha,\beta,\gamma)$, 
$\psi\circ\phi=\id$.
Conversely \[[\xi_\a,\xi_\b,\xi_\c]\times(1,1,1)=(\xi_\b-\xi_\c,\xi_\c-\xi_\a,\xi_\a-\xi_\b)\] 
for any representative of $[\xi_\a,\xi_\b,\xi_\c]$,
and since $[\xi_\a,\xi_\b,\xi_\c]=[0,-(\xi_\a-\xi_\b),\xi_\c-\xi_\a]$, 
$\phi\circ\psi=\id$ as well.
Therefore $\phi$ and $\psi$ are inverses.

Finally, $\P(\X)$ is isomorphic to 
$\P^1(\C)$,
whose blowup at the three points $[H]=[H_\a]\sqcup[H_\b]\sqcup[H_\c]$ 
is a smooth $\R$-variety isomorphic to $\#_3\P^2(\R)$, which is Dyck's surface.
\end{proof}

\begin{Remark}
The definition of $\phi$ is more symmetric than it looks,
since $[0,-\gamma,\beta]=[\gamma,0,-\alpha]=[-\beta,\alpha,0]$.
\end{Remark}

\section{\Large Moduli Spaces and Stacks}\label{mss}

Generally speaking, if an entire set $\frak X$ of objects and their degenerations is 
parameterized by a smooth scheme $X$, then the functor $\scr F_\frak X$ that assigns
to each base $B$ the set of all families in $\frak X$ over $B$
is representable by $X$, making $X$ a moduli space for $\frak X$ (\cite[p.106]{Beh}).
Thus by showing the sets $\TT$ and $\DD$ are smooth $\R$-varieties, we have in effect
proved they are moduli spaces. Once we establish a natural group action whose quotients
are the unrestricted (unlabeled, unoriented) triangles and classes, we
will also obtain corresponding quotient stacks. 

\subsection{\bf Moduli Spaces}

To prove $\TT$ and $\DD$ are moduli spaces it remains to show that the moduli functors
$\FF_\TT$ and $\FF_\DD$ are representable, which is essentially immediate by Theorems~\ref{main1}
and \ref{main2}. 

\begin{Theorem}\label{Theta}
$\FF_\TT$ and $\FF_\DD$
are represented by $\TT$ and $\DD$, respectively.
\end{Theorem}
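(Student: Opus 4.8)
The plan is to recognize each moduli functor as the functor of points of the corresponding variety, so that representability becomes essentially tautological. By definition a functor is represented by an object $X$ exactly when it is naturally isomorphic to $\Hom_{\Var{\R}}(-,X)$; thus it suffices to produce natural isomorphisms $\FF_\TT\cong\Hom_{\Var{\R}}(-,\TT)$ and $\FF_\DD\cong\Hom_{\Var{\R}}(-,\DD)$. Write $\Omega=\C_0\times\X\times\P^5(\R)\times(\R/\pi)^3$ for the ambient space of $\TT$ and $\Omega'=\P(\X)\times\mbb T$ for that of $\DD$.

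First I would build the candidate natural transformation. For each base $B$, post-composition with the inclusion $\iota:\TT\hookrightarrow\Omega$ sends a morphism $B\to\TT$ to a morphism $B\to\Omega$ whose image lies in $\TT$, i.e. to an element of $\FF_\TT(B)$; this is natural in $B$ because the pullback of Definition~\ref{varfam} is precomposition $f\mapsto f\circ m$, which commutes with post-composition by $\iota$. Injectivity of this map is formal, since $\iota$ is a monomorphism, being an immersion. The entire content of the theorem therefore lies in surjectivity: every family, i.e. every morphism $f:B\to\Omega$ with image in $\TT$, must factor through $\iota$.

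For surjectivity I would appeal to the universal property of closed immersions. By Theorem~\ref{main1}, $\TT=\Bl_{\wt H}(\Bl_\0(\C_0\times\X))$ is realized as the closure of ${\rm B}^\circ$ in $\Omega$, hence as a closed subvariety, so $\iota$ is a closed immersion with ideal sheaf $\mc I_\TT$. Given $f:B\to\Omega$ with image in the closed set $\TT=\V(\mc I_\TT)$, the pulled-back functions $f^{\#}(\mc I_\TT)$ vanish at every real point of $B$; since $B$ is a reduced $\R$-variety whose real points are Zariski dense (it is a smooth manifold of full dimension), this forces $f^{\#}(\mc I_\TT)=0$, so $f$ factors uniquely as $B\to\TT\xrightarrow{\iota}\Omega$. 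This exhibits the desired preimage in $\Hom_{\Var{\R}}(B,\TT)$ and completes the isomorphism $\FF_\TT\cong\Hom_{\Var{\R}}(-,\TT)$, so $\FF_\TT$ is represented by $\TT$.

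The argument for $\FF_\DD$ is identical once the embedding is in place: Theorem~\ref{main2} and Lemma~\ref{blowcomp} identify $\DD$, via the isomorphism $\phi$, with $\Bl_{[H]}(\P(\X))$ and exhibit it as a closed subvariety of $\Omega'$ (closed because $\DD$ is compact and $\Omega'$ is Hausdorff), so $\DD\hookrightarrow\Omega'$ is again a closed immersion and the same reduced-base factorization yields $\FF_\DD\cong\Hom_{\Var{\R}}(-,\DD)$. The one genuine point to watch—the main obstacle—is exactly this factorization step: the definition of a family requires only that the image lie set-theoretically in $\TT$ (resp. $\DD$), and one must upgrade this to a scheme-theoretic factorization of morphisms of $\R$-varieties. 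Reducedness and density of real points of the base, together with the closedness of the embeddings, are precisely what makes the upgrade automatic; with the structure Theorems~\ref{main1} and~\ref{main2} in hand, the representability is then immediate.
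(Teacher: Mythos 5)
Your proposal is correct and follows essentially the same route as the paper: both observe that, with Theorems~\ref{main1} and~\ref{main2} in hand, a family over $B$ is by Definition~\ref{varfam} nothing but a morphism $B\to\TT$ (resp.\ $B\to\DD$), so each moduli functor is the functor of points of the corresponding variety and representability is tautological. The only differences are that you make explicit the factorization of a morphism into the ambient space with set-theoretic image in the closed subvariety through that subvariety (a step the paper treats as immediate), and that you handle $\FF_\TT$ directly by the same argument, whereas the paper defers the $\TT$ case to the $n=3$ instance of Fulton--MacPherson.
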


\begin{proof}
We just prove it for $\DD$, since the result for $\TT$ follows immediately from 
Proposition~\ref{prop}, Theorem~\ref{main1}, and the $n=3$
case of \cite[Theorem 4]{FM94}.
Since $\DD$ is naturally identified with the smooth $\R$-variety
$\Bl_{[H]}(\P(\X))$ by Theorem~\ref{main2},
it remains to show $\FF_\DD$ is isomorphic to $\Hom_{\Var{\R}}(-,\DD)$.
But by Definition~\ref{varfam} a family with base $B$ is given by a morphism from $B$ to $\DD$,
so we have a natural map $\Theta:\FF_\DD\to\Hom(-,\DD)$.
To prove this is a natural transformation we only have to show that a morphism
$m:B\to C$ determines a commutative diagram
\[\begin{tikzcd}[ampersand replacement=\&]
{\FF_\DD(C)} \&\& {\Hom(C,\DD)} \\
\\
{\FF_\DD(B)} \&\& {\Hom(B,\DD)}
\arrow["{\Theta_C}", from=1-1, to=1-3]
\arrow["\FF_\DD (m)"', from=1-1, to=3-1]
\arrow["{\Hom(m,\DD)}", from=1-3, to=3-3]
\arrow["{\Theta_B}"', from=3-1, to=3-3]
\end{tikzcd}\]
This is immediate, because
$\FF_\DD(m)$ is given by composition with $m$, which is the same as $\Hom(m,\DD)$ 
by definition of the $\Hom$ functor. 
Since $\Theta$ is obviously invertible, this completes the proof.
\end{proof}

\begin{Corollary}
$\DD$ (resp. $\TT$) is a fine moduli space for the set of all labeled, oriented,
possibly-degenerate similarity classes of triangles (resp. labeled, oriented,
possibly-degenerate triangles).
In particular, the family $\DD$ (resp. $\TT$) is a universal family.
\end{Corollary}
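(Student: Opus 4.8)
The plan is to read this off directly from Theorem~\ref{Theta}: by definition a \emph{fine} moduli space for a class of objects is nothing but a variety representing the moduli functor, together with the universal family it carries, so once representability is known the corollary is a formal consequence of the Yoneda lemma. I will carry out the argument for $\DD$; the case of $\TT$ is word-for-word the same (and may alternatively be cited from the $n=3$ case of \cite[Theorem 4]{FM94}, as in the proof of Theorem~\ref{Theta}).

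First I would recall the definition. A fine moduli space for the similarity classes is a smooth $\R$-variety $M$ together with a family $\mathcal{U}$ over $M$ --- the \emph{universal family} --- such that for every base $B$ every family over $B$ is the pullback of $\mathcal{U}$ along a unique morphism $B\to M$. By Yoneda this is equivalent to a natural isomorphism $\FF_\DD\isom\Hom_{\Var{\R}}(-,M)$, and Theorem~\ref{Theta} provides exactly such an isomorphism $\Theta$ with $M=\DD$. Evaluating at a point $B=\Spec\R$ specializes $\Theta$ to a bijection between the $\R$-points of $\DD$ and the families over a point, i.e. the individual similarity classes; this recovers the set-level identification $\DD=\{[T]\}$ of Definition~\ref{similarity} and justifies the phrase ``fine moduli space for the set of all $\ldots$ classes.''

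Next I would name the universal family. Under $\Theta$ it is the element of $\FF_\DD(\DD)$ corresponding to $\id_\DD\in\Hom(\DD,\DD)$; by Definition~\ref{varfam} this is precisely the identity morphism $\id_\DD\colon\DD\to\DD$ viewed as a family over $\DD$, which is the content of the clause ``the family $\DD$ is a universal family.'' To verify universality by hand, given any family $f\colon B\to\DD$ I would put $m=f$ and use the pullback rule of Definition~\ref{varfam}(b) to compute $m^*(\id_\DD)=\id_\DD\circ m=m=f$, recovering $f$; and since $(m')^*(\id_\DD)=m'$ for any morphism $m'\colon B\to\DD$, the requirement $(m')^*(\id_\DD)=f$ forces $m'=f$, giving uniqueness.

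I expect no genuine obstacle: the substantive work --- that $\TT$ and $\DD$ are smooth $\R$-varieties (Theorems~\ref{main1} and \ref{main2}), that the geometric variable separates every degeneration (Proposition~\ref{prop}), and that the functors are representable (Theorem~\ref{Theta}) --- is already complete. The only point demanding care is the bookkeeping that makes these formal identifications legitimate: confirming that ``family'' in Definition~\ref{varfam} literally means ``morphism into the moduli space'' and that ``pullback'' literally means ``precomposition,'' so that abstract representability transcribes verbatim into the classical universal property, with $\id_\DD$ (resp. $\id_\TT$) as the universal family.
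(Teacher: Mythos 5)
Your proposal is correct and follows essentially the same route as the paper: both deduce the corollary formally from the representability isomorphism $\Theta$ of Theorem~\ref{Theta}, with no new geometric input. If anything, your version is slightly more explicit than the paper's, since you identify the universal family concretely as $\id_\DD\in\FF_\DD(\DD)$ (the Yoneda image of the identity) and verify the pullback property $m^*(\id_\DD)=m$ directly, whereas the paper disposes of the second clause with ``by definition of universal family''; this extra bookkeeping is harmless and arguably clarifying.
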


\begin{proof}
We prove it for $\DD$.
Since $\Theta:\FF_\DD\to\Hom(-,\DD)$ is an isomorphism by Theorem~\ref{Theta}, for any
transformation $\Psi:\FF_\DD\to\Hom(-,\EE)$ where $\EE$ is an $\R$-variety,
there is a natural transformation $\Phi:\Hom(-,\DD)\to\Hom(-,\EE)$
such that $\Psi=\Phi\circ\Theta$. This proves $\DD$ is a fine moduli space.
The second statement follows from the first by definition of universal family.
\end{proof}

\subsection{\bf Group Action}\label{ga}
The symmetric group
$S_3$ acts on $\TT$ by fixing $B_0$ and permuting
the alphabetical labelings on 
$(B_0;(\a,\b,\c);[a_1,a_2,b_1,b_2,c_1,c_2];(\xi_\a,\xi_\b,\xi_\c))$,
and $\Z_2$ acts by reversing orientation, which attaches a minus sign to 
$\a,\b,\c$.
These actions commute, and since $S_3\times\Z_2\isom D_6$ we have a group action \[D_6\times\TT\isim\TT\]
This result appears in \cite[1.1.1]{Beh}, especially Exercise 1.38.
There is an obvious induced action 
\[D_6\times\DD\isim\DD\]
The quotient $\DD/D_6$
is a coarse moduli space, whose points 
correspond to absolute (unlabeled, unoriented) similarity classes.

\subsection{\bf Moduli Stacks}
Since $\DD$ is a $D_6$-variety whose orbits are the similarity classes of possibly-degenerate triangles,
we obtain an algebraic description of this set as the quotient stack $[\DD/D_6]$.
Similarly, $[\TT/D_6]$ is the stack of (unrestricted) possibly degenerate triangles.
These assertions follow formally from the definitions, which 
we present in this subsection.

By \cite[Section 4, Example 4.8]{DM69} or \cite[p.88]{Beh} the stack quotient $[\DD/D_6]$ 
is the category whose objects $(E/B,f)$ are diagrams
\[\begin{tikzcd}[ampersand replacement=\&]
E\&\DD\\ B
\arrow[r,"f",from=1-1,to=1-2,-latex]
\arrow[d,from=1-1,to=2-1,-latex]
\end{tikzcd}\]
where $B\in\Var{\R}$, $E/B$ is a $D_6$-torsor, and $f$ is $D_6$-equivariant;
and whose morphisms 
\[(g,\phi):(E'/B',f')\lr(E/B,f)\] are given by morphisms $g:B'\to B$ in $\Var{\R}$
and $D_6$-equivariant isomorphisms $\phi:E'\to E\times_B B'\to E$ satisfying $f'=f\circ\phi$.
We write $[\DD/D_6](B)$ for the subcategory of $[\DD/D_6]$ determined by fixed base $B$.
A morphism $g:B'\to B$ in $\Var{\R}$ determines the (pullback) functor 
\[g^*:[\DD/D_6](B)\lr[\DD/D_6](B')\]
which is base extension, taking $(E/B,f)$ to $(E\times_B B',f\circ p_1)$, 
where $p_1:E\times_B B'\to E$ is the projection.
By construction
each $[\DD/D_6](B)$ is a groupoid (see \cite[Section 1.1.1]{Beh}), and since $\DD$ is a scheme its functor of points is a sheaf,
hence $[\DD/D_6]$ is an algebraic stack, which is a sheaf fibered in groupoids.

\begin{Theorem}
$[\DD/D_6]$ 
is a fine moduli stack of (unrestricted) similarity classes of triangles 
with
$\DD/D_6$ 
the corresponding coarse moduli space, and where degeneracy is defined
as in Definition~\ref{triangles}.
\end{Theorem}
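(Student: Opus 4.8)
The plan is to apply the standard rigidification principle: since $\DD$ is a fine moduli space for the \emph{rigidified} problem of labeled, oriented classes by Theorem~\ref{Theta}, and $D_6=S_3\times\Z_2$ acts (Subsection~\ref{ga}) by permuting exactly the rigidifying data---the labels via $S_3$ and the orientation via $\Z_2$---the quotient stack $[\DD/D_6]$ ought to be the fine moduli stack for the problem obtained by forgetting that data, namely the unrestricted classes. First I would make precise the moduli problem that $[\DD/D_6]$ is meant to solve, by defining a \emph{family of unrestricted similarity classes} over a base $B\in\Var{\R}$ to be a family that is \'etale-locally a family of labeled, oriented classes in the sense of Definition~\ref{varfam}, with the ambiguity in the labeling and orientation over overlaps recorded by $D_6$-valued transition data. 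Assembling these functorially in $B$ gives a category fibered in groupoids $\FF_{\rm abs}$, and the theorem amounts to an equivalence $\FF_{\rm abs}\isom[\DD/D_6]$.

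The key step is to exhibit this equivalence by unwinding the Deligne--Mumford description of $[\DD/D_6]$ already recorded above. Given an object $(E/B,f)$, with $E\to B$ a $D_6$-torsor and $f:E\to\DD$ equivariant, I would trivialize $E$ over an \'etale cover $\{U_i\to B\}$; on each $U_i$ the map $f$ becomes a morphism $U_i\to\DD$, i.e.\ a labeled, oriented family by Theorem~\ref{Theta}, and the torsor's transition cocycle in $D_6$ is precisely the relabeling-and-reorientation gluing these into an object of $\FF_{\rm abs}(B)$. Conversely an unrestricted family produces its torsor of local rigidifications together with the equivariant comparison map to $\DD$, and these constructions are mutually inverse and compatible with base change. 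The universal family is then the equivariant identity $\DD\to\DD$ sitting over the canonical atlas $\DD\to[\DD/D_6]$; every family is its pullback along a morphism to the stack that is unique up to unique isomorphism, which is exactly fineness.

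For the coarse moduli space, I would invoke that $D_6$ is finite and $\DD$ is a smooth, compact $\R$-variety (Theorem~\ref{main2}), so the geometric quotient $\DD/D_6$ exists as an $\R$-variety and receives a canonical morphism $[\DD/D_6]\to\DD/D_6$. It remains to check the two defining properties of a coarse space: that this morphism is initial among morphisms from $[\DD/D_6]$ to algebraic spaces, and that it is a bijection on geometric points. Both are standard for finite-group quotients: the geometric points of $[\DD/D_6]$ are the $D_6$-orbits of $\DD$, and these are by construction the points of $\DD/D_6$; universality follows since any $D_6$-invariant map to an algebraic space out of the atlas $\DD$ factors uniquely through $\DD/D_6$. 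By the identification of orbits with unlabeled, unoriented classes in Subsection~\ref{ga}, the points of $\DD/D_6$ are exactly the absolute classes.

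The main obstacle is \emph{not} a computation but the honest formulation of $\FF_{\rm abs}$: because forgetting labels and orientation is only well-defined up to the $D_6$-action, a family of absolute classes cannot be a mere morphism to a scheme, and the definition must be local-with-descent from the outset; the real content is verifying that $D_6$-equivariant data on a torsor descends to a well-defined absolute family and that this descent is functorial in $B$. Compounding this, the $D_6$-action on $\DD$ is far from free---the equilateral class, the isosceles loci, and the degenerate strata carry nontrivial stabilizers---so these families genuinely possess nontrivial automorphisms. This is exactly why the fine moduli object must be the stack $[\DD/D_6]$ rather than any scheme, and why the coarse space $\DD/D_6$ necessarily discards the automorphism data at the symmetric classes while still bijecting with the set of absolute classes.
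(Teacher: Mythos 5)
Your proposal is correct and follows the same route the paper intends: the paper sets up the Deligne--Mumford torsor description of $[\DD/D_6]$ and then asserts the theorem ``follows formally from the definitions,'' and your argument --- trivializing the torsor to reduce to the representability of $\FF_\DD$ by $\DD$ (Theorem~\ref{Theta}), gluing by the $D_6$-cocycle, and invoking the standard finite-quotient facts for the coarse space --- is exactly the formal verification being alluded to. Your closing remarks about the non-free action and the necessity of a stack rather than a scheme correctly identify the content the paper leaves implicit.
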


\section{\Large Sphere and Torus}\label{ssss}

That $\wh\C$ and $\T$ are fine moduli spaces of certain restricted classes appears 
in \cite[1.1.10]{Beh} and \cite{BG23}, respectively.
Each space compactifies the space of nondegenerate, labeled,
oriented triangle classes, but has singularities in its description of
degenerating families, because its geometric variable does not encompass all degenerate classes.
In this section we construct each space from its geometric variable
(respectively $[\a,\b,\c]$ and $(\alpha,\beta,\gamma)$),
describe its properties,
show how each is obtained from $\DD$ as a blowdown,
and illustrate the geometric singularites.

\subsection{\bf Side-Side-Side Space}
Identifying $\DD$ with $\Bl_{[H]}(\P(\X))$ via Theorem~\ref{main2} and $\P(\X)$ with the Riemann sphere $\wh\C$ defines
a projection $\pi_{\wh\C}:\DD\to\wh\C$, and shows $\DD$ can be viewed as the blowup of $\wh\C$ at the image 
$\{\delta_\a,\delta_\b,\delta_\c\}$ of 
the three double points $[H]=[H_\a]\sqcup[H_\b]\sqcup[H_\c]$.
The space $\wh\C$ itself parameterizes triangle classes by side-vectors 
$[\a,\b,\c]$ alone, 
and includes the simple points, which form the metric border defined by this variable
between positively and negatively oriented nondegenerates. 
We call it {\it side-side-side shape space.}
In this section we map out the classes in $\wh\C$ via the projection from $\DD$,
recovering the constructions in \cite{Kend84}, \cite{Beh}, and \cite{Montgomery}.

To map $\P(\X)\isom\P^1(\C)$ to $\wh\C$, we map $\X=\V(X+Y+Z)\subset\P^2(\C)$ to 
$\V(Z)\isom\C^2$ with a special unitary transformation,
and then apply the Hopf map
from $\C^2-\{0\}$ to the unit sphere in $\R^3$ (compare \cite[Exercise 1.39]{Beh}  and \cite[Theorem 1]{Montgomery}).
Thus the classes of labeled oriented triangles are given by the quotient
\[\begin{tikzpicture}
\node (A) at (-2,0) {$\DD$};
\node (B) at (0,0) {$\P(\X)$};
\node (C) at (2,0) {$\P(\V(Z))$};
\node (D) at (4,0) {$\P^1(\C)$};
\node (E) at (6,0) {$\wh\C$};
\node (AA) at (-2,-.8) {$[T]$};
\node (BB) at (0,-.8) {$[\a,\b,\c]$};
\node (CC) at (2,-.8) {$[\bs u,\bs v,\bs 0]$};
\node (DD) at (4,-.8) {$[\bs u,\bs v]$};
\node (EE) at (6,-.8) {$\mf{H}_{\bs i}({\bs u},{\bs v})$};
\draw[->,-latex] (A)--(B);
\draw[->,-latex] (B)--(C);
\draw[->,-latex] (C)--(D);
\draw[->,-latex] (D)--(E);
\draw[->,-latex] (AA)--(BB);
\draw[->,-latex] (BB)--(CC);
\draw[->,-latex] (CC)--(DD);
\draw[->,-latex] (DD)--(EE);
\draw[->, bend left=15,-latex] (A) to (E);
\node[above] at (5,-.1) {$\widesim$};
\node at (2,.8) {\footnotesize $\pi_{\wh\C}$};
\end{tikzpicture}\]
where $[T]=([\a,\b,\c];(\alpha,\beta,\gamma))$.
For the right-multiplication matrix taking $\X$ to 
$\V(Z)$ we choose
\[\sfrac 16\,\sdbmx{3+\sqrt 3&-3+\sqrt 3&2\sqrt 3
\\-3+\sqrt 3&3+\sqrt 3&2\sqrt 3\\-2\sqrt 3&-2\sqrt 3&2\sqrt 3}\;\in\SU(3)\]
so that $[\bs u,\bs v]
=[\a+(2-\sqrt 3)\b,(2-\sqrt 3)\a+\b]$.
For the Hopf map
we identify $(\u,\v)\in\C^2-\{0\}$ with the quaternion $\u+\v\j\in\mbb H^\times$, and assign to it the point 
\[\mf{H}_{\i}(\u,\v):=(\u+\v\j)^{-1}\i(\u+\v\j)
=\sfrac{\left(\u\bar\u-\v\bar\v,(\bar\u\v-\u\bar\v)\bs i,\bar\u\v+\u\bar\v\right)}
{\u\bar\u+\v\bar\v}\]
The image is the unit sphere in the space of pure imaginary quaternions,
which we identify with $\wh\C$ in $\R^3$.
The stabilizer of $\i$ under right multiplication
is the subgroup $\C^\times\leq\mbb H^\times$, which shows
$\mf{H}_{\i}(\u,\v)=\mf{H}_{\i}(\u',\v')$ if and only if $[\u,\v]=[\u',\v']$.
Therefore $\mf{H}_{\i}$ gives the desired isomorphism $\P^1(\C)\isom\wh\C$.
The following images on $\wh\C$ of different triangle types are now easy to verify.
\begin{Proposition}\label{sphere}
Let $X,Y,Z$ be the coordinate functions on $\R^3$, in which $\wh\C$ is embedded as the unit sphere 
$\V(X^2+Y^2+Z^2-1)$. Then under the above projection $\pi_{\wh\C}:\DD\to\wh\C:$
\begin{enumerate}[\rm(a)]
\item
The degenerate classes map to $\V(Y)\cap\wh\C$ in $\R^3$,
such that distinct simple points are separated, and the double-point divisors $[E_\a],[E_\b],[E_\c]$ map to
$\delta_\a=(-\tfrac{\sqrt 3}2,0,\tfrac 12)$, $\delta_\b=(\tfrac{\sqrt 3}2,0,\tfrac 12)$, 
$\delta_\c=(0,0,-1)$, respectively.
\item
Positively oriented classes map to $Y<0$, negatively oriented classes to $Y>0$.
\item
The positively oriented equilateral point $[\bs u,\bs v]=[1,\i]\in\P^1(\C)$ maps to
$-\j$ on $\wh\C$, 
and the negatively oriented counterpart $[\bs u,\bs v]=[1,-\i]\in\P^1(\C)$ maps to $\j\in\wh\C$.
\item
The isosceles classes form three great circles at $X-\sqrt 3 Z=0$,
$X+\sqrt 3 Z=0$, and $X=0$, identified by odd sides $\a$, $\b$, and $\c$, respectively.
\item
The right classes form three circles at 
$-\sqrt 3 X+Z=-1$, $\sqrt 3 X+Z=-1$, and $Z=\frac 12$, identified by hypotenuses
$\a,\b$, and $\c$, respectively.
\item
The obtuse classes form three spherical caps of height $\frac 12$ and area $\pi$.
\end{enumerate}
\end{Proposition}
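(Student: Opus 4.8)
The plan is to make the composite $\pi_{\wh\C}:\DD\to\wh\C$ completely explicit as a real map and then read off each item by direct substitution. Writing $t=2-\sqrt3$, the given $\SU(3)$ matrix yields $\u=\a+t\b$ and $\v=t\a+\b$ (with $\c=-\a-\b$), and the Hopf map produces the sphere coordinates
\[X=\frac{|\u|^2-|\v|^2}{|\u|^2+|\v|^2},\qquad Y=\frac{-2\Im(\bar\u\v)}{|\u|^2+|\v|^2},\qquad Z=\frac{2\Re(\bar\u\v)}{|\u|^2+|\v|^2}.\]
Expanding gives $|\u|^2-|\v|^2=(1-t^2)(|\a|^2-|\b|^2)$, $\Re(\bar\u\v)=t(|\a|^2+|\b|^2)+(1+t^2)\Re(\bar\a\b)$, and $\Im(\bar\u\v)=(1-t^2)\Im(\bar\a\b)$. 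I would record at the outset the single fact that makes everything collapse: $t$ is a root of $x^2-4x+1$, so $1+t^2=4t$, while $1-t^2\mp 2\sqrt3\,t=0$ for exactly one choice of sign and $1-t^2>0$. Everything below is substitution into these formulas.

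For (a)--(c): a class is degenerate exactly when $[\a,\b,\c]$ is real, i.e.\ $\Im(\bar\u\v)=0$, i.e.\ $Y=0$, which is the first assertion; the separation of distinct simple points is not a new computation but follows from the injectivity of the Hopf composition already noted before the Proposition, together with the fact that $\DD\to\P(\X)$ is an isomorphism off the exceptional locus $[E]$ (Theorem~\ref{main2}). The three values $\delta_\a,\delta_\b,\delta_\c$ are obtained by substituting $[H_\a]=[0,1,-1]$, $[H_\b]=[1,0,-1]$, $[H_\c]=[1,-1,0]$, the denominators simplifying via $t^2=4t-1$. For (b) I would use that $Y$ has the sign of $-\Im(\bar\a\b)$ (since $1-t^2>0$), that $\Im(\bar\a\b)$ is nonvanishing and hence of constant sign on the connected positively oriented nondegenerate locus, and that this sign is pinned by Definition~\ref{triangles}(e); part (c) is the single substitution $[\u,\v]=[1,\pm\i]$, which I would first confirm is the equilateral class by checking $\v=\pm\i\u$ for $[\a,\b,\c]=[1,\omega,\omega^2]$.

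For (d)--(f): I translate each metric condition into an equation in $\a,\b$ and substitute. ``Isosceles with odd side $\c$'' is $|\a|=|\b|$, forcing $X=0$; ``odd side $\a$'' is $|\b|=|\c|$, i.e.\ $\Re(\bar\a\b)=-\tfrac12|\a|^2$, and substituting into $X\pm\sqrt3\,Z$ the numerator factors as $[(1-t^2)\pm 2\sqrt3\,t](|\a|^2-|\b|^2)$, one coefficient of which vanishes by the root identity, giving the stated great circle; odd side $\b$ is symmetric. ``Right with hypotenuse $\a$'' is $\Re(\bar\b\c)=0$, equivalently $\Re(\bar\a\b)=-|\b|^2$, and the same substitution produces the affine equation of the stated circle. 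Finally (f) is immediate from (e): each right circle is cut by a plane at distance $\tfrac12$ from the centre, so it bounds a cap of height $\tfrac12$ and, by the cap-area formula $2\pi h$, area $\pi$, and obtuseness is precisely the inequality defining the cap's interior.

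The routine part is this algebra, kept short by $t^2-4t+1=0$. The one place demanding genuine care --- the main obstacle --- is the bookkeeping of signs and labelings: matching each of the three isosceles great circles and each of the three right circles to the correct odd side or hypotenuse with the correct sign of $\sqrt3$. I would pin this down not by the algebra alone but by anchoring each circle through the double points it must contain: an isosceles family with odd side $\a$ degenerates as its base collapses, so its circle must pass through $\delta_\a$, while a right family with hypotenuse $\a$ degenerates as each leg collapses, so its circle must pass through $\delta_\b$ and $\delta_\c$. These incidence checks against the $\delta_\bullet$ computed in (a) both fix the indexing and cross-check the orientation convention of (b).
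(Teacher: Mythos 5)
The paper itself supplies no argument for this Proposition beyond the sentence ``The following images on $\wh\C$ of different triangle types are now easy to verify,'' so your plan---make the composite $[\a,\b,\c]\mapsto[\u,\v]\mapsto\mf H_{\i}(\u,\v)$ explicit and substitute---is exactly the verification the authors intend, and your reductions are the right ones: with $t=2-\sqrt3$ one has $t^2=4t-1$, hence $1+t^2=4t$ and $1-t^2=2\sqrt3\,t>0$, and all six parts collapse to the identities you list. Your treatment of (a), (b), (c), (e), (f) is correct as written (I checked: $[H_\a],[H_\b],[H_\c]$ do land on the stated $\delta_\bullet$; the positively oriented equilateral $[1,\omega,\omega^2]$ satisfies $\v=\i\u$ and maps to $(0,-1,0)$, consistent with $Y<0$ in (b); and the hypotenuse-$\a$ circle $-\sqrt3X+Z=-1$ does pass through $\delta_\b$ and $\delta_\c$ as your anchoring test demands).

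The one genuine problem is in (d), precisely at the sign bookkeeping you flag as the main obstacle---and there your write-up papers over a real discrepancy. Under the odd-side-$\a$ condition $\Re(\bar\a\b)=-\tfrac12|\a|^2$ one gets $\Re(\bar\u\v)=t(|\b|^2-|\a|^2)$, so the numerator of $X+\sqrt3\,Z$ is $\bigl[(1-t^2)-2\sqrt3\,t\bigr](|\a|^2-|\b|^2)=0$, while the numerator of $X-\sqrt3\,Z$ is $\bigl[(1-t^2)+2\sqrt3\,t\bigr](|\a|^2-|\b|^2)\neq0$. Thus the odd-side-$\a$ great circle is $X+\sqrt3\,Z=0$, not $X-\sqrt3\,Z=0$ as the Proposition prints; your own anchoring criterion confirms this, since the circle through $\delta_\a=(-\tfrac{\sqrt3}2,0,\tfrac12)$ among the three candidates is $X+\sqrt3\,Z=0$ (a concrete check: $\a=2$, $\b=-1+\i$, $\c=-1-\i$ maps to $X=\tfrac{\sqrt3}4$, $Z=-\tfrac14$). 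So you cannot simultaneously assert that the substitution ``gives the stated great circle'' and that the incidence with $\delta_\a$ fixes the indexing: the two agree with each other but disagree with the statement, whose first two labels in (d) are transposed. Since the analogous test passes for (e) exactly as printed, you should carry the computation through honestly and record the corrected correspondence (odd sides $\a,\b,\c$ give $X+\sqrt3\,Z=0$, $X-\sqrt3\,Z=0$, $X=0$ respectively) rather than claim agreement with the text.
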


Figure~\ref{spheres} shows side views of the parameterizing space $\wh\C$ embedded in $\R^3$, using yellow and gray
shading for positively and negatively oriented classes, respectively.

\begin{equation}\label{spheres}
\begin{minipage}{0.4\textwidth}
\begin{tikzpicture}
\coordinate (A) at (-{sqrt(3)/2}, 1/2);
\coordinate (B) at ({sqrt(3)/2}, 1/2);
\coordinate (C) at (0, -1);
\coordinate (AA) at (-{sqrt(3)/2}, -1/2);
\coordinate (BB) at ({sqrt(3)/2}, -1/2);
\coordinate (CC) at (0, 1);
\fill (A) circle (1pt) node[left] {$\delta_\a$};
\fill (B) circle (1pt) node[right] {$\delta_\b$};
\fill (C) circle (1pt) node[below left] {$\delta_\c$};
\fill[yellow!50] (0,0) circle (1); 
\draw[thick,dashed] (A)--(B)--(C)--(A);
\draw[thick] (B)--(AA);
\draw[thick] (A)--(BB);
\draw[thick] (C)--(CC);
\draw[dashed,->,-latex](-1.5,0)--(1.5,0) node[right] {$X$};
\draw[dashed,->,-latex](0,-1.5)--(0,1.5) node[left] {$Z$};
\draw (0,0) arc (180:90:1 and 1) node[right] {equilateral};
\draw (1/2,{-sqrt(3)/2}) arc (180:90:.5 and -.5) node[right] {degenerate};
\draw (-1/4,1/2) arc (0:90:.5 and .5) node[left] {right};
\draw ({-sqrt(3)/3}, -1/3) arc (0:90:.5 and -.5) node[left] {isosceles};
\draw (0,-1/2) arc (0:73:1.5 and -.4); 
\draw[thick] (0,0) circle (1);
\filldraw[white] (0,0) circle (1pt);
\filldraw[white] (1/2,{-sqrt(3)/2});
\draw (0,0) circle (1.5pt);
\draw (1/2,{-sqrt(3)/2}) circle (1.5pt);
\draw (-1/4,1/2) circle (1.5pt);
\draw ({-sqrt(3)/3}, -1/3) circle (1.5pt);
\draw (0,-1/2) circle (1.5pt);
\node at (0,-2) {View of $\wh\C$ from $-Y$-axis};
\end{tikzpicture}
\end{minipage}
\begin{minipage}{0.2\textwidth}
\begin{tikzpicture}
\coordinate (A) at (-{sqrt(3)/2}, 1/2);
\coordinate (B) at ({sqrt(3)/2}, 1/2);
\coordinate (C) at (0, -1);
\coordinate (AA) at (-{sqrt(3)/2}, -1/2);
\coordinate (BB) at ({sqrt(3)/2}, -1/2);
\coordinate (CC) at (0, 1);
\fill (A) circle (1pt) node[left] {$\delta_\b$};
\fill (B) circle (1pt) node[right] {$\delta_\a$};
\fill (C) circle (1pt) node[below left] {$\delta_\c$};
\fill[gray!50] (0,0) circle(1); 
\draw[thick,dashed] (A)--(B)--(C)--(A);
\draw[thick] (B)--(AA);
\draw[thick] (A)--(BB);
\draw[thick] (C)--(CC);
\draw[dashed,->,-latex](1.5,0)--(-1.5,0) node[left] {$X$};
\draw[dashed,->,-latex](0,-1.5)--(0,1.5) node[left] {$Z$};
\draw[thick] (0,0) circle (1);
\filldraw[white] (0,0) circle (1pt);
\draw (0,0) circle (1.5pt);
\node at (0,-2) {View from $+Y$-axis};
\end{tikzpicture}
\end{minipage}
\end{equation}


Figure~\ref{3dsphere} shows the Riemann sphere with equilateral points on the $Y$-axis, 
degenerate great circle at $Y=0$,
double points on the great circle of degenerates, and the three isosceles great circles.

\tdplotsetmaincoords{80}{120}
\begin{equation}\label{3dsphere}
\begin{minipage}{0.2\textwidth}
\begin{tikzpicture}[tdplot_main_coords, scale=.8]
\begin{scope}
  \foreach \theta in {0,10,...,350} {
    \foreach \phi in {0,10,...,170} {

      \pgfmathsetmacro\xA{2*sin(\phi)*cos(\theta)}
      \pgfmathsetmacro\yA{2*sin(\phi)*sin(\theta)}
      \pgfmathsetmacro\zA{2*cos(\phi)}

      \pgfmathsetmacro\xB{2*sin(\phi+10)*cos(\theta)}
      \pgfmathsetmacro\yB{2*sin(\phi+10)*sin(\theta)}
      \pgfmathsetmacro\zB{2*cos(\phi+10)}

      \pgfmathsetmacro\xC{2*sin(\phi+10)*cos(\theta+10)}
      \pgfmathsetmacro\yC{2*sin(\phi+10)*sin(\theta+10)}
      \pgfmathsetmacro\zC{2*cos(\phi+10)}

      \pgfmathsetmacro\xD{2*sin(\phi)*cos(\theta+10)}
      \pgfmathsetmacro\yD{2*sin(\phi)*sin(\theta+10)}
      \pgfmathsetmacro\zD{2*cos(\phi)}

      \ifdim \yA pt > 0pt \else
      \ifdim \yB pt > 0pt \else
      \ifdim \yC pt > 0pt \else
      \ifdim \yD pt > 0pt \else
        \fill[yellow!50, opacity=0.5, draw=none]
          (\xA,\yA,\zA) --
          (\xB,\yB,\zB) --
          (\xC,\yC,\zC) --
          (\xD,\yD,\zD) -- cycle;
      \fi\fi\fi\fi
    }
  }
\end{scope}

\begin{scope}
  \foreach \theta in {0,10,...,350} {
    \foreach \phi in {0,10,...,170} {

      \pgfmathsetmacro\xA{2*sin(\phi)*cos(\theta)}
      \pgfmathsetmacro\yA{2*sin(\phi)*sin(\theta)}
      \pgfmathsetmacro\zA{2*cos(\phi)}

      \pgfmathsetmacro\xB{2*sin(\phi+10)*cos(\theta)}
      \pgfmathsetmacro\yB{2*sin(\phi+10)*sin(\theta)}
      \pgfmathsetmacro\zB{2*cos(\phi+10)}

      \pgfmathsetmacro\xC{2*sin(\phi+10)*cos(\theta+10)}
      \pgfmathsetmacro\yC{2*sin(\phi+10)*sin(\theta+10)}
      \pgfmathsetmacro\zC{2*cos(\phi+10)}

      \pgfmathsetmacro\xD{2*sin(\phi)*cos(\theta+10)}
      \pgfmathsetmacro\yD{2*sin(\phi)*sin(\theta+10)}
      \pgfmathsetmacro\zD{2*cos(\phi)}

      \ifdim \yA pt > 0pt \def\drawgray{1}
      \else\ifdim \yB pt > 0pt \def\drawgray{1}
      \else\ifdim \yC pt > 0pt \def\drawgray{1}
      \else\ifdim \yD pt > 0pt \def\drawgray{1}
      \else \def\drawgray{0}
      \fi\fi\fi\fi

      \ifnum\drawgray=1
        \fill[gray!50, opacity=0.5, draw=none]
          (\xA,\yA,\zA) --
          (\xB,\yB,\zB) --
          (\xC,\yC,\zC) --
          (\xD,\yD,\zD) -- cycle;
      \fi
    }
  }
\end{scope}
\draw[line width=0.5mm, domain=-90:90, smooth, variable=\t] plot ({2*cos(\t)},0,{2*sin(\t)}); 
\draw[line width=0.2mm, dashed, domain=90:270, smooth, variable=\t] plot ({2*cos(\t)},0,{2*sin(\t)}); 
\draw[thick, domain=-90:90, smooth, variable=\t] plot (0,{2*cos(\t)},{2*sin(\t)}); 
\draw[line width=0.2mm, dashed, domain=90:270, smooth, variable=\t] plot (0,{2*cos(\t)},{2*sin(\t)}); 
\draw[thick, domain=-60:120, smooth, variable=\t] plot ({sqrt(3)*cos(\t)},{2*sin(\t)},{cos(\t)}); 
\draw[line width=0.2mm, dashed, domain=0:360, smooth, variable=\t] plot ({sqrt(3)*cos(\t)},{2*sin(\t)},{cos(\t)}); 
\draw[thick, domain=-60:120, smooth, variable=\t] plot ({sqrt(3)*cos(\t)},{2*sin(\t)},{-cos(\t)}); 
\draw[line width=0.2mm, dashed, domain=0:360, smooth, variable=\t] plot ({sqrt(3)*cos(\t)},{2*sin(\t)},{-cos(\t)}); 
\draw[thick, domain=0:360, smooth, variable=\t] plot ({-1.01*cos(\t)},{sqrt(3)*1.01*cos(\t)},{2.02*sin(\t)}); 
\draw[thick,->,-latex] (0,0,0) -- (3.5,0,0) node[anchor=north west]{\footnotesize $X$};
\draw[thick,->,-latex] (0,0,0) -- (0,2.8,0) node[anchor=north west]{\footnotesize $Y$};
\draw[thick,->,-latex] (0,0,0) -- (0,0,2.7) node[anchor=east]{\footnotesize $Z$};
\node at (0,0,-3) {$\wh\C\subset\R^3$};
\fill[white] (0,2,0) circle (2pt);
\draw (0,2,0) circle (2pt);
\fill (2,0,0) circle (2pt);
\fill (0,0,2) circle (2pt);
\fill ({sqrt(3)},0,1) circle (2pt) 
node[black,anchor=north west]{\footnotesize $\delta_\b$};
\fill ({-sqrt(3)},0,1) circle (2pt)
node[black,anchor=north east]{\footnotesize $\delta_\a$};
\fill (0,0,-2) circle (2pt)
node[black,anchor=north]{\footnotesize $\delta_\c$};

\end{tikzpicture}
\end{minipage}
\end{equation}

\subsection{\bf Angle-Angle-Angle Space}\label{aaas}

In the previous subsection we showed how $\DD$ blows down to the 
side-side-side shape space $\wh\C$.
Next we show how $\DD$ blows down to the 2-torus $\T$,
which is 
the shape space in the angle-angle-angle dispensation, or {\it angle-angle-angle shape space.}
The $2$-torus $\T$ with its flat metric is called the {\it Clifford torus}, 
a flat embedding in $\R^4$.
It separates classes at double points, 
but fails to distinguish the simple degenerate (collinear) classes 
of $\wh\C$.
First we review the construction of $\T$ in \cite{BG23}.

\begin{Lemma}\label{rep}
The projection $\pi_{\T }:\DD\to(\R/\pi)^3$ defined by
\[\pi_{\T }([a_1+a_2\i,b_1+b_2\i,c_1+c_2\i];(\alpha,\beta,\gamma))=(\alpha,\beta,\gamma)\]
has image isomorphic to the 2-torus $\T$. It sends positively oriented classes 
into the subset $P+Q+R=\pi\pmod{3\pi}$;
negatively oriented classes into $P+Q+R=-\pi\pmod{3\pi}$;
and degenerate classes $\a\b\c=\0$ to the boundary $PQR=0$, such that all distinct double points
of $\DD$ are separated.
\end{Lemma}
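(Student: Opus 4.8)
The plan is to transport $\pi_\T$ across the isomorphism $\phi:\DD\to\Bl_{[H]}(\P(\X))$ of Theorem~\ref{main2} and then dispose of the four assertions—torus image, the two orientation sheets, the degenerate boundary, and separation of double points—one at a time. For the image I would first note that the vector-product relation \eqref{vector} forces $\alpha+\beta+\gamma\equiv 0\pmod\pi$, so $\pi_\T(\DD)\subseteq\V(P+Q+R)=\T$; this is already implicit in $\DD\subset\P(\X)\times\T$. That $\T$ is a $2$-torus I would get by presenting it as the kernel of the character $(\R/\pi)^3\to\R/\pi$, $(P,Q,R)\mapsto P+Q+R$, which is surjective on $\pi_1$ and hence has connected kernel. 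Conceptually the cleanest description is that $\pi_\T\circ\psi$ (with $\psi=\phi^{-1}$) equals the composite of the second-factor projection $([\a,\b,\c];[\xi_\a,\xi_\b,\xi_\c])\mapsto[\xi_\a,\xi_\b,\xi_\c]$ with the linear map $\bar M:[\xi_\a,\xi_\b,\xi_\c]\mapsto(\xi_\b-\xi_\c,\xi_\c-\xi_\a,\xi_\a-\xi_\b)$, and $\bar M:\P((\R/\pi)^3)\to\T$ is an isomorphism of tori: its kernel is exactly the additive diagonal of Definition~\ref{equivrltn}, and the integer matrix realizing it carries $\Z^3$ onto the lattice $\{x+y+z=0\}\cap\Z^3$ cutting out $\T$.

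For surjectivity I would argue directly that every $(\alpha,\beta,\gamma)\in\T$ is realized by a class. Choosing representatives, set $\xi_\a=0$, $\xi_\b=-\gamma$, $\xi_\c=\beta$, so that $(\xi_\b-\xi_\c,\xi_\c-\xi_\a,\xi_\a-\xi_\b)=(\alpha,\beta,\gamma)$ using $\alpha=-(\beta+\gamma)$. The three planar vectors in directions $\xi_\a,\xi_\b,\xi_\c$ are linearly dependent over $\R$, so there are reals $t_\a,t_\b,t_\c$, not all zero, with $t_\a e^{\xi_\a\i}+t_\b e^{\xi_\b\i}+t_\c e^{\xi_\c\i}=\0$; taking $(\a,\b,\c)=(t_\a e^{\xi_\a\i},t_\b e^{\xi_\b\i},t_\c e^{\xi_\c\i})$ gives side-vectors with $\a+\b+\c=\0$, i.e.\ a point $[\a,\b,\c]\in\P(\X)$ whose class maps to $(\alpha,\beta,\gamma)$. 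When all $t$'s are nonzero this is a genuine (possibly simple) triangle; when one coefficient is forced to vanish—which happens exactly when two of the prescribed directions coincide—the corresponding side-vector is $\0$ and its argument is free by Lemma~\ref{blowcomp}, so the class is a double point still realizing the triple. Hence $\pi_\T(\DD)=\T$.

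The orientation claim is the delicate part. I would use representatives of $\alpha,\beta,\gamma$ in $[0,\pi)$ and track the lifted sum in $\R/3\pi$. For a nondegenerate positively oriented triangle the formula values $\xi_\b-\xi_\c$, $\xi_\c-\xi_\a$, $\xi_\a-\xi_\b$ are the genuine Euclidean interior angles, which lie in $(0,\pi)$ and sum to $\pi$ by Euclid; to nail the identification I would compute one reference case (the equilateral class, where each value is $\pi/3$ and the side-vector frame is visibly counterclockwise, i.e.\ has positive signed area) and then invoke connectedness of the positively oriented locus—an open hemisphere by Proposition~\ref{sphere}(b)—on which no coordinate reaches $0$ or $\pi$, so the $[0,\pi)$-representative sum is continuous and therefore constantly $\pi$. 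A negatively oriented class has genuine signed angles in $(-\pi,0)$ summing to $-\pi$, so each $[0,\pi)$-representative is larger by $\pi$ and the sum is $2\pi\equiv-\pi\pmod{3\pi}$. This also discharges the equivalence asserted in Definition~\ref{triangles}(e) between the signed-area (counterclockwise/clockwise) definition and the representative-sum characterization.

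Finally, the degenerate and separation statements follow from the explicit angle triples. A simple point has parallel side-vectors, so $\xi_\a=\xi_\b=\xi_\c$ and $(\alpha,\beta,\gamma)=(0,0,0)$, while a double point with $\b=\0$ has $(\alpha,\beta,\gamma)=(\alpha,0,-\alpha)$ with $\alpha=\xi_\b-\xi_\c$ governed by the free argument; in every degenerate case at least one coordinate is $0\pmod\pi$, so the image lies on the boundary $PQR=0$. Separation is then immediate: the double points with $\b=\0$ sweep out the circle $\{(\alpha,0,-\alpha)\}$ injectively as the free argument $\alpha$ varies, and the three families ($\a=\0$, $\b=\0$, $\c=\0$) occupy the three distinct boundary circles $P=0$, $Q=0$, $R=0$, so $\pi_\T$ is injective on the double-point locus. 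I expect the orientation step to be the main obstacle, since it is the only place where the $\R/\pi$-representatives must be lifted and summed carefully against the $\R/3\pi$ bookkeeping and reconciled with the geometric definition of orientation; the remaining claims are formal once the factorization through $\bar M$ and the explicit degenerate triples are in place.
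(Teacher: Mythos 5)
Your argument is sound, but it takes a genuinely different route from the paper's. The paper's proof is essentially a citation: it invokes \cite[Theorem 3.1]{BG23} for the fact that $\V(P+Q+R)\subset(\R/\pi)^3$ is a $2$-torus and \cite[Theorem 4.1]{BG23} for the bijection between the two orientation types and the two open triangular regions of the fundamental domain, and then simply records where the simple and double points land. You instead build everything from scratch: the factorization of $\pi_\T\circ\psi$ through the isomorphism $\bar M:\P((\R/\pi)^3)\to\T$ is a structural observation the paper never makes (it identifies the angle component of $\DD$ with the quotient of the argument torus by the additive diagonal of Definition~\ref{equivrltn}, and the lattice computation you sketch does check out); the realization of an arbitrary angle triple by a linear dependence among three prescribed directions replaces the cited surjectivity, and correctly accounts for the double-point case where one coefficient is forced to vanish; and the orientation claim is settled by one reference computation at the equilateral class plus continuity of the $[0,\pi)$-representative sum on the connected positively oriented locus. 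What the paper buys is brevity and consistency with its source; what you buy is independence from \cite{BG23} and, as you note, an actual discharge of the equivalence asserted in Definition~\ref{triangles}(e), which the paper defers to this lemma but never spells out. (Your reliance on Proposition~\ref{sphere}(b) for connectedness is legitimate since it precedes the lemma, though that proposition is itself unproved in the paper; it would be safer to note directly that the positively oriented nondegenerate locus is the continuous image of the connected set of counterclockwise side-vector pairs.)

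One point needs tightening. The three boundary circles $\V(P)\cap\T$, $\V(Q)\cap\T$, $\V(R)\cap\T$ are distinct but not disjoint: they meet at the origin, and the three doubled-simple classes (one for each vanishing side-vector, with the free argument chosen so that all interior angles vanish) all map to $(0,0,0)$ together with every simple point. So ``the three families occupy three distinct circles, hence $\pi_\T$ is injective on the double-point locus'' overstates the conclusion. The lemma, and the paper's phrase ``interior boundary points,'' concern only the ordinary double points; for those your parametrizations $(\alpha,0,-\alpha)$ with $\alpha\neq 0$, and its two cyclic analogues, do give pairwise disjoint punctured circles and hence separation. State that restriction explicitly and the argument is complete.
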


\begin{proof}
The space $(\R/\pi)^3$ is a 3-torus, and by \cite[Theorem 3.1]{BG23}
the subspace $\T=\V(P+Q+R)\subset(\R/\pi)^3$, which is the image of $\pi_\T$, is a 2-torus;
a compact abelian Lie group, and an $\R$-variety.
Figure~\ref{cube} depicts a fundamental domain, in which all angles are taken from
the interval $[0,\pi)$.
\begin{equation}\label{cube}
\tdplotsetmaincoords{60}{120}
\begin{minipage}{0.5\textwidth}
\begin{tikzpicture}[scale=2,tdplot_main_coords]
\coordinate (A) at (0,0,0);
\coordinate (B) at (1,0,0);
\coordinate (C) at (1,1,0);
\coordinate (D) at (0,1,0);
\coordinate (E) at (0,0,1);
\coordinate (F) at (0,1,1);
\coordinate (G) at (1,0,1);
\coordinate (H) at (1,1,1);
\draw (A)--(B)--(C)--(D)--cycle; 
\draw (A)--(E)--(F)--(D)--cycle; 
\draw (A)--(B)--(G)--(E)--cycle; 
\draw[->,-latex] (A)--(2,0,0) node[left] {$P$};
\draw[->,-latex] (A)--(0,2,0) node[right] {$Q$};
\draw[->,-latex] (A)--(0,0,1.5) node[left] {$R$};
\fill (A) circle (.5pt);
\fill (B) circle (.5pt) node[left] {\footnotesize $(\pi,0,0)$};
\fill (D) circle (.5pt) node[right] {\footnotesize $(0,\pi,0)$};
\fill (C) circle (.5pt);
\fill (E) circle (.5pt) node[left] {\footnotesize $(0,0,\pi)$};
\fill (F) circle (.5pt);
\fill (G) circle (.5pt);
\fill (H) circle (.5pt);
\fill[yellow, opacity=0.5] (1,0,0) -- (0,1,0) -- (0,0,1) -- cycle;
\draw[thick] (B)--(D)--(E)-- cycle;
\fill[gray, opacity=0.5] (1,1,0) -- (1,0,1) -- (0,1,1) -- cycle;
\draw[thick] (C)--(G)--(F)-- cycle;
\draw (G)--(H)--(F); 
\draw (H)--(C); 
\node at (1,1,-.5) {$\T\subset(\R/\pi)^3$};
\draw (.35,.85,1) arc (290:180:.5 and .1) node[right] {\footnotesize $P+Q+R=2\pi\pmod{3\pi}$};
\draw (.2,.9,.2) arc (270:180:.5 and .1) node[right] {\footnotesize $P+Q+R=\pi\pmod{3\pi}$};
\end{tikzpicture}
\end{minipage}
\end{equation}
The boundaries of the triangular regions, where one angle is
$0\pmod\pi$, are identified
on opposite faces of the cube, and all 8 vertices, where all
angles are $0\pmod\pi$, are identified to a single point.

By \cite[Theorem 4.1]{BG23}, the positively oriented classes are in bijective
correspondence with points of the yellow region
$P+Q+R=\pi\pmod{3\pi}$, negatively oriented classes with the gray region
$P+Q+R=2\pi\pmod{3\pi}$, and neutrally oriented inscribable classes with the boundary $\V(PQR)=\V(P)\cup\V(Q)\cup\V(R)$.
Angles in $[0,\pi)$ adding to $2\pi$ may be replaced by angles in $(-\pi,0]$ adding to $-\pi$.
All simple points map to the single vertex $(0,0,0)\in(\R/\pi)^3$, and the double points
bijectively to the interior boundary points of $\V(P),\V(Q)$, and $\V(R)$.
\end{proof}

Here are the two orientations of nondegenerate classes,
with the gluing instructions
indicated on the edges:

\begin{equation}\label{angles}
\begin{minipage}{.4\textwidth}
\centering
\begin{tikzpicture}[scale=1.5]
\coordinate (A) at ({-sqrt(3)/2},0);
\coordinate (B) at (0,3/2);
\coordinate (C) at ({sqrt(3)/2},0);
\coordinate (D) at (0,1/2); 
\coordinate (E) at (0,0);
\coordinate (F) at ({-sqrt(3)/4},3/4); 
\coordinate (G) at ({sqrt(3)/4},3/4);
\fill (A) circle (.5pt);
\fill (B) circle (.5pt);
\fill (C) circle (.5pt);
\fill (D) circle (.5pt);
\fill (E) circle (.5pt);
\fill (F) circle (.5pt);
\fill (G) circle (.5pt);
\fill[yellow!50] (G)--(E)--(F)--(G) (A)--(B)--(C)--(A);
\draw[thick,dashed] (G)--(E)--(F)--(G);
\draw[thick] (A)--(B)--(C)--(A);
\draw[thick] (G)--(A);
\draw[thick] (B)--(E);
\draw[thick] (F)--(C);
\filldraw[white] (D) circle (1pt); 
\draw (D) circle (1pt);
\draw[thick,->] (C)--(-1/4,0);
\draw[thick,->>] (A)--({-sqrt(3)/6},1);
\draw[thick,->>>] (B)--({3*sqrt(3)/8},3/8);
\draw (D) arc (180:90:1 and .75) node[right] {equilateral};
\draw (1/2,0) arc (180:90:.5 and -.25) node[right] {degenerate};
\draw ({(sqrt(3)+1)/4},{(3-sqrt(3))/4}) arc (90:0:.75 and .4);
\draw (-1/6,3/4) arc (0:90:.5 and .5) node[left] {right};
\draw ({-sqrt(3)/4},1/4) arc (0:90:.5 and .25) node[left] {isosceles};
\draw (1/2,0) circle (1.5pt);
\draw (-1/6,3/4) circle (1.5pt);
\draw ({-sqrt(3)/4},1/4) circle (1.5pt);
\draw ({(sqrt(3)+1)/4},{(3-sqrt(3))/4}) circle (1.5pt);
\node[left] at (F) {$E_{\delta_\b}$};
\node[right] at (G) {$E_{\delta_\a}$};
\node[below] at (E) {$E_{\delta_\c}$};
\node at (0,-5/8)  {Positive Orientation};
\end{tikzpicture}
\end{minipage}
\begin{minipage}{.4\textwidth}
\centering
\begin{tikzpicture}[scale=1.5]
\coordinate (A) at ({-sqrt(3)/2},0);
\coordinate (B) at (0,-3/2);
\coordinate (C) at ({sqrt(3)/2},0);
\coordinate (D) at (0,-1/2);
\coordinate (E) at (0,0);
\coordinate (F) at ({-sqrt(3)/4},-3/4);
\coordinate (G) at ({sqrt(3)/4},-3/4);
\fill (A) circle (.5pt);
\fill (C) circle (.5pt);
\fill (D) circle (.5pt);
\fill (E) circle (.5pt);
\fill (B) circle (.5pt);
\fill (F) circle (.5pt);
\fill (G) circle (.5pt);
\fill[gray!50] (G)--(E)--(F)--(G) (A)--(B)--(C)--(A); 
\draw[thick,dashed] (G)--(E)--(F)--(G);
\draw[thick] (A)--(B)--(C)--(A);
\draw[thick] (G)--(A);
\draw[thick] (B)--(E);
\draw[thick] (F)--(C);
\filldraw[white] (D) circle (1pt); 
\draw (D) circle (1pt);
\draw[thick,->>>] (A)--({-sqrt(3)/8},-9/8);
\draw[thick,->>] (B)--({sqrt(3)/2-sqrt(3)/6},-3/2+1);
\draw[thick,->] (C)--(-1/4,0);
\node[left] at (F) {$E_{\delta_\a}$};
\node[right] at (G) {$E_{\delta_\b}$};
\node[above] at (E) {$E_{\delta_\c}$};
\node at (0,-2)  {Negative Orientation};
\end{tikzpicture}
\end{minipage}
\end{equation}
The sides are the exceptional fibers of the blowup of $\wh\C$ at
$\delta_\a,\delta_\b,\delta_\c$, as indicated.

\begin{Proposition}
The projection
$\pi_\T:\DD\to\T$ of Lemma~\ref{rep} is the blowup of $\T$ at $0$.
\end{Proposition}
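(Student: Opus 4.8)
The plan is to realize $\pi_\T$ as the real oriented blowup $\Bl_0(\T)\to\T$ at $0=(0,0,0)$, working throughout inside $\Bl_{[H]}(\P(\X))$ via the identification $\DD\isom\Bl_{[H]}(\P(\X))$ of Theorem~\ref{main2}. First I would pin down the fiber over $0$. By the interior-angle relation \eqref{vector}, a class has $(\alpha,\beta,\gamma)=(0,0,0)$ in $\T$ iff $\xi_\a=\xi_\b=\xi_\c\pmod\pi$, i.e. the side-vectors are parallel, i.e. $[\a,\b,\c]$ lies on the real locus $\P^1(\R)\subset\P^1(\C)\isom\P(\X)$ of degenerate classes. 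Hence $\pi_\T^{-1}(0)$ is exactly the degenerate circle $\wt C$: the strict transform in $\Bl_{[H]}(\P(\X))$ of this real circle, a single topological circle $\isom\R/\pi$ consisting of the simple points together with the three points at which $\wt C$ meets the exceptional divisors $[E_\a],[E_\b],[E_\c]$ (the points whose free argument equals the common argument of the two nonzero sides). In the notation of Theorem~\ref{main2} this is $\{([\a,\b,\c];[0,0,0]):[\a,\b,\c]\in\P^1(\R)\}$, consistent with Proposition~\ref{sphere}(a) and Lemma~\ref{rep}.

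Next I would show $\pi_\T$ restricts to a bijection $\DD\setminus\wt C\to\T\setminus\{0\}$. On nondegenerate classes this is Euclid's angle-angle-angle theorem with the sign convention of Definition~\ref{triangles}(e): a triple with all entries nonzero lying in $P+Q+R=\pi$ (resp. $-\pi$) determines a unique similarity class of positive (resp. negative) orientation, so $\pi_\T$ carries the nondegenerates bijectively onto the two open regions. On the boundary $\V(PQR)$ with $0$ removed, Lemma~\ref{rep} already supplies the bijection from double points to the interior boundary points of $\V(P),\V(Q),\V(R)$. Granting the local normal form of the next step (which makes $\pi_\T$ a local diffeomorphism off $\wt C$), this bijective morphism of smooth $\R$-varieties is an isomorphism over $\T\setminus\{0\}$.

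The crux is to match $\wt C$ with the exceptional fiber of $\Bl_0(\T)$, which I would present (as in Lemma~\ref{blowcomp}) through its exceptional fiber $\P^1(\R)=\R/\pi$ of real tangent directions at $0$, the modification being supported near $0$. The content is a first-order computation: placing $B=0$, $C=1$, and the middle vertex $A=a+\ep\i$, the angles of the resulting thin triangle satisfy $\beta\sim\ep/a$, $\gamma\sim\ep/(1-a)$, and $\alpha\equiv-(\beta+\gamma)\pmod\pi$, so $(\alpha,\beta,\gamma)$ approaches $0$ along the direction
\[D(a)=\bigl(-(\tfrac1a+\tfrac1{1-a}),\ \tfrac1a,\ \tfrac1{1-a}\bigr),\]
which lies in the tangent plane $P+Q+R=0$. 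As the degenerate shape ranges over the entire circle $\P^1(\R)$ — all three choices of middle vertex and all ratios — the induced direction class $[D]\in\P^1(\R)$ sweeps $\R/\pi$ exactly once, with the double-point limits $a\to0,1,\infty$ sending $[D]$ to the edge-directions of $\V(R),\V(Q),\V(P)$ at $0$, matching the way $\wt C$ threads through $[E_\c],[E_\b],[E_\a]$. Reversing orientation ($\ep\mapsto-\ep$) replaces $D$ by $-D$, the same class in $\R/\pi$, so the positively and negatively oriented families limit onto a single unoriented direction from its two sides — exactly the local structure (an added cross-cap) of the oriented blowup. In a polar chart $(s,t)\mapsto s\,D(t)$ this exhibits $\pi_\T$ as the blowup map.

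The main obstacle is this third step: proving that $t\mapsto[D(t)]$ is a smooth bijection of $\wt C$ onto the exceptional fiber and that $\pi_\T$ has exactly the first-order normal form of the real oriented blowup uniformly along $\wt C$ — in particular controlling the limits as $t$ crosses the three special points, where the approach direction aligns with an edge and the thin-triangle picture degenerates. Once this normal form is established, the first two steps assemble to give that $\pi_\T:\DD\to\T$ is the blowup $\Bl_0(\T)\to\T$, as claimed.
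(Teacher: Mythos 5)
Your outline is structurally the same as the paper's: both reduce the claim to showing (i) the fiber $E_0=\pi_\T^{-1}(0)$ is isomorphic to $\P^1(\R)$, (ii) $\pi_\T$ restricts to an isomorphism off $E_0$, and (iii) the fiber points parameterize the tangent directions at $0\in\T$. Your identification of the fiber as the strict transform of the real circle is correct, and your thin-triangle asymptotics are consistent with the paper: your direction class $[D(a)]=\big[-\tfrac{1}{a(1-a)},\tfrac1a,\tfrac1{1-a}\big]=[-1,1-a,a]$ is exactly the fiber point $[\a,\b,\c]=[1,a-1,-a]$ of the limiting degenerate class. The problem is that you leave the decisive verification --- that the correspondence between fiber points and approach directions holds uniformly, in particular at the three points where your chart $A=a+\ep\i$ degenerates ($a\to 0,1,\infty$) and the limit lands on an exceptional divisor $[E_\a],[E_\b],[E_\c]$ --- as an acknowledged ``obstacle,'' and you additionally make step (ii) contingent on that unproven normal form. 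As written, nothing in the proposal actually establishes either (ii) or (iii).

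The missing idea is an explicit global inverse of $\pi_\T$ off the fiber. Inscribing the vertices in the unit circle as $(A,B,C)=(e^{2\beta\i},e^{-2\alpha\i},1)$ gives
\[[\a,\b,\c]=[1-e^{-2\alpha\i},\,-1+e^{2\beta\i},\,e^{-2\alpha\i}-e^{2\beta\i}],\]
defined on all of $\T-\{\0\}$; this settles (ii) immediately with no appeal to a local normal form. For (iii), a single application of l'H\^opital along the line $L(t)=t(\alpha_0,\beta_0,\gamma_0)$ yields $\lim_{t\to 0}[\a(t),\b(t),\c(t)]=[2\alpha_0\i,2\beta_0\i,2\gamma_0\i]=[\alpha_0,\beta_0,\gamma_0]$ for \emph{every} direction, including the three special ones, with no chart-patching. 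Note also that you are demanding more than is required: by the criterion the paper invokes (\cite[IV-16]{EH}), it suffices to check the fiber is $\P^1(\R)$, the map is an isomorphism off it, and the limit along each curve through $0$ is the fiber point determined by its tangent direction; no first-order normal form uniformly along the fiber needs to be established separately. Replacing your affine chart by the inscribed-vertex formula and adopting that weaker criterion closes the argument.
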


\begin{proof}
The blowup of any smooth real 2-dimensional manifold (such as $\T$) over a point
has fiber isomorphic to $\P^1(\R)$, and the coordinates on the fiber parameterize 
the approach directions on the tangent plane.
By e.g. \cite[IV-16]{EH}, to prove the proposition
it is enough to show 
the fiber $E_0:=\pi_\T^{-1}(0)$, which is the zero set of $\a\b\c$, is isomorphic to $\P^1(\R)$;
$\pi_\T|_{\DD-E_0}$ is an isomorphism;
and that points on $E_0$ parameterize approaches to $0$ on $\T$,
i.e., if $L(t):L(0)=0$ is a curve on $\T$ then the limit on the fiber is 
\[\lim_{t\to 0}\pi^{-1}_\T(L(t))=(0,[L'(0)])\in\T\times\P^1(\R)\]

Each $(\alpha,\beta,\gamma)\neq(0,0,0)$
determines $[\a,\b,\c]$ by the rule
\[[\a,\b,\c]
=[1-e^{-2\alpha\i},-1+e^{2\beta\i},e^{-2\alpha\i}-e^{2\beta\i}]\]
using the description of vertices 
$(A,B,C)=(e^{2\beta\i},e^{-2\alpha\i},1)$ inscribed in the unit circle.
Since this evidently inverts $\pi_\T$ on $\T-\{0\}$, 
$\pi_\T|_{\DD-E_0}$ is an isomorphism.


Since their interior angles are $0\pmod\pi$,
the triples of side-vectors in $E_0$ are collinear,
given by $E_0=\{([\a,\b,\c];(0,0,0)):\a,\b,\c\in\R\}$. Therefore $E_0\isom\P^1(\R)$.
Let $(\alpha_0,\beta_0,\gamma_0)\neq 0$ be on $\T$, and set $L(t)=t(\alpha_0,\beta_0,\gamma_0)$,
a line in $\T$ with constant direction given by $[\alpha_0,\beta_0,\gamma_0]$.
The class of side-vectors over $L(t)$ is
\[[\a(t),\b(t),\c(t)]:=[1-e^{-2t\alpha_0\i},-1+e^{2t\beta_0\i},e^{-2t\alpha_0\i}-e^{2t\beta_0\i}]\]
Taking the limit and applying l'H\^opital's rule yields
\[\lim_{t\to 0}([\a(t),\b(t),\c(t)];t(\alpha_0,\beta_0,\gamma_0))=
([\alpha_0,\beta_0,\gamma_0];(0,0,0))\in E_0\]
Therefore the points on $E_0$ parameterize approaches to $0$, as desired.
We conclude $\DD=\Bl_0(\T)$.
\end{proof}

\subsection{\bf The Missing Degenerate Classes}\label{mdc}

The following examples illustrate how each space, the torus $\mbb T$ and the sphere $\wh\C$, 
fail to accurately model the geometry of distinct degenerating families.
In each, the limit points are distinct on one model but not on the other.
Of course, they are all separated in $\DD$.

\begin{Example}
In \eqref{constantangle} we have two families with differing constant interior angles $\alpha$ at $A$.
They are represented on $\mbb T$ as the distinct lines $\alpha=\pi/2$ and $\alpha=2\pi/3$, 
and they define distinct degenerate limit points. But
on $\wh\C$ they converge to the common limit point $\delta_\b$.

\begin{equation}\label{constantangle}
\begin{minipage}{.4\textwidth}
\centering
\begin{tikzpicture}[scale=1.1]
\def\radius{1.3}
\draw[line width=0.2mm, domain=0:180, smooth, variable=\t, dashed] plot ({\radius*cos(\t)},{\radius*sin(\t)}); 
\coordinate (target) at ({\radius*cos(180)},{\radius*sin(180)}); 
\coordinate (target2) at ({\radius*cos(0)},{\radius*sin(0)}); 
\foreach \angle in {0,10,25,45,90}{
\coordinate (vertex) at ({\radius*cos(\angle)},{\radius*sin(\angle)});
\fill (vertex) circle (1pt);
\fill (-\radius,0) circle (1pt);
\draw (vertex) -- (target);
\draw (vertex) -- (target2);
\draw[thick] (target)--(target2);
}
\draw[->,-latex] (0,{\radius})--(.4,{\radius});
\fill (target) circle (1.5pt);
\fill (target2) circle (1.5pt);
\draw (target2) circle (2.5pt);
\node[below right] at (target2) {\footnotesize $C$};
\node[below left] at (target) {\footnotesize $B$};
\node[above] at ({\radius*cos(90)},{\radius*sin(90)}) {\footnotesize $A$};
\node at (0,-.65) {Family of Right Classes};
\node at (0,-1) {Converging to Double Point};
\end{tikzpicture}
\end{minipage}
\begin{minipage}{.4\textwidth}
\begin{tikzpicture}[scale=1.1]
\def\radius{2}
\draw[line width=0.2mm, domain={acos(1.3/2)}:{180-acos(1.3/2)}, smooth, variable=\t, dashed] plot 
({\radius*cos(\t)},{-sqrt(4-(1.3)^2)+\radius*sin(\t)}); 
\coordinate (target) at ({\radius*(-1.3/2)},{-sqrt(4-(1.3)^2)+\radius*sin(acos(1.3/2))}); 
\coordinate (target2) at ({\radius*(1.3/2)},{-sqrt(4-(1.3)^2)+\radius*sin(acos(1.3/2))}); 
\foreach \angle in {acos(1.3/2),65,80,95,180-acos(1.3/2)}{
\coordinate (vertex) at ({\radius*cos(\angle)},{-sqrt(4-(1.3)^2)+\radius*sin(\angle)});
\fill (vertex) circle (1pt);
\draw (vertex) -- (target);
\draw (vertex) -- (target2);
\draw[thick] (target)--(target2);
}
\draw[->,-latex] ({\radius*cos(95)},{-sqrt(4-(1.3)^2)+\radius*sin(95)})--
({\radius*cos(95)+.4*sin(95)},{-sqrt(4-(1.3)^2)+\radius*sin(95)-.4*cos(95)});
\fill (target) circle (1.5pt);
\fill (target2) circle (1.5pt);
\draw (target2) circle (2.5pt);
\node[below right] at (target2) {\footnotesize $C$};
\node[above left] at (target) {\footnotesize $B$};
\node[above] at ({\radius*cos(95)},{-sqrt(4-(1.3)^2)+\radius*sin(95)}) {\footnotesize $A$};
\node at (0,1.6) {}; 
\node at (0,-.55) {Family of Obtuse Classes};
\node at (0,-.9) {Converging to Double Point};

\end{tikzpicture}
\end{minipage}
\end{equation}
\end{Example}

\begin{Example}
In \eqref{constantsideratio} we have two families with distinct side-edge ratios, one isosceles $|\c|=|\b|$,
and the other with $|\c|=2|\b|$. They are represented on $\mbb T$ as a line and a curve, respectively,
converging to the degenerate triple point $(\pi,0,0)$.
But the corresponding curves on $\wh\C$ have distinct limit points.
\begin{equation}\label{constantsideratio}
\begin{minipage}{.4\textwidth}
\centering
\begin{tikzpicture}[scale=1.1]
\def\radius{1.3}
\coordinate (target) at ({\radius*cos(180)},{\radius*sin(180)}); 
\coordinate (target2) at ({\radius*cos(0)},{\radius*sin(0)}); 
\foreach \height in {0,.15,.4,.7,1.3}{
\coordinate (vertex) at (0,{\height});
\fill (vertex) circle (1pt);
\fill (-\radius,0) circle (1pt);
\draw (vertex) -- (target);
\draw (vertex) -- (target2);
\draw[dashed] (0,0)--(0,1.3);
\draw[thick] (target)--(target2);
}
\draw[->,-latex] (0,{\radius*sin(90)})--
(0,{\radius*sin(90)-.4});
\fill (target) circle (1.5pt);
\fill (target2) circle (1.5pt);
\fill (0,0) circle (1.5pt);
\node[below right] at (target2) {\footnotesize $C$};
\node[below left] at (target) {\footnotesize $B$};
\node[above] at ({\radius*cos(90)},{\radius*sin(90)}) {\footnotesize $A$};
\node at (0,-.65) {Family of Isosceles Classes};
\node at (0,-1) {Converging to Simple Point};
\end{tikzpicture}
\end{minipage}
\begin{minipage}{.4\textwidth}
\begin{tikzpicture}[scale=1.1]
\def\radius{1.3}
\draw[line width=0.2mm, domain=0:180, smooth, variable=\t, dashed] plot ({\radius*cos(\t)},{\radius*sin(\t)}); 
\coordinate (target) at ({-2*\radius},0); 
\coordinate (target2) at ({-\radius/2},0); 
\foreach \angle in {45,110,140,160,170}{
\coordinate (vertex) at ({\radius*cos(\angle)},{\radius*sin(\angle)});
\fill (vertex) circle (1pt);
\fill ({\radius},0) circle (.5pt);
\draw (vertex) -- (target);
\draw (vertex) -- (target2);
\draw[thick] (target)--(target2);
}
\draw[->,-latex] ({\radius*cos(45)},{\radius*sin(45)})--
({\radius*cos(45)-.2*sqrt(2)},{\radius*sin(45)+.2*sqrt(2)});
\fill (target) circle (1.5pt);
\fill (target2) circle (1.5pt);
\fill ({-\radius},0) circle (1.5pt);
\node[below right] at (target2) {\footnotesize $C$};
\node[below left] at (target) {\footnotesize $B$};
\node[above right] at ({\radius*cos(45)},{\radius*sin(45)}) {\footnotesize $A$};
\node at (-.65,-.65) {Family of Scalene $(2:1)$ Edge-Ratio Classes};
\node at (-.65,-1) {Converging to Simple Point};
\end{tikzpicture}
\end{minipage}
\end{equation}

\end{Example}

\newpage

\bibliographystyle{alpha} 
\bibliography{../../MathDocs/MasterBib.bib}

\end{document}